\newtheorem{theorem}{Theorem}[section]
\newtheorem{definition}[theorem]{Definition}
\newtheorem{proposition}[theorem]{Proposition}
\newtheorem{remark}[theorem]{Remark}
\newtheorem{example}[theorem]{Example}
\newcommand{\Er}{\mathcal{M}}
\newcommand{\rr}{\mathbb{R}}
\newcommand{\cc}{\mathbb{C}}
\newcommand{\pp}{\partial}
\newcommand{\unx}{\underline{x}}
\newcommand{\uns}{\underline{s}}
\newcommand{\unp}{\underline{p}}
\newcommand{\una}{\underline{a}}
\title{\bf A new functional calculus for noncommuting operators}
\author{Fabrizio Colombo\\Dipartimento di Matematica\\Politecnico di
Milano\\Via Bonardi, 9\\20133 Milano,
Italy\\fabrizio.colombo@polimi.it
\\
\and Irene Sabadini\\Dipartimento di Matematica\\Politecnico di
Milano\\Via Bonardi, 9\\20133 Milano,
Italy\\irene.sabadini@polimi.it \and Daniele C.
Struppa\\Department of Mathematics \\and Computer Sciences
\\Chapman University\\Orange, CA 92866 USA,
\\struppa@chapman.edu}
\date{ }
\begin{document}
\maketitle
\begin{abstract}
In this paper we use the notion of slice monogenic functions
\cite{slicecss} to define a new functional calculus for an
$n$-tuple $T$ of not necessarily commuting operators. This
calculus is different from the one discussed in \cite{jefferies}
and it allows the explicit construction of the eigenvalue equation
for the $n$-tuple $T$ based on a new notion of spectrum for $T$.
Our functional calculus is consistent with the Riesz-Dunford
calculus in the case of a single operator.
\end{abstract}

\noindent AMS Classification: 47A10, 47A60, 30G35.

\noindent {\em Key words}: slice monogenic functions, functional
calculus, spectral theory, noncommuting operators.

\section{Introduction}

In a series of interesting papers, see e.g. \cite{jmc},
\cite{jmcpw}, \cite{mcp} as well as in the book \cite{jefferies} and the references
therein, the authors have developed a monogenic functional calculus,
whose purpose is to deal with $n$-tuples of not necessarily commuting
operators. The interest in this problem can be traced to the early
works of Taylor, see for
example \cite{Taylor}, \cite{Taylor3}, and it is of great physical interest.
In this context, one has other approaches such as the Weyl calculus.
A complete review of these theories can be found in \cite{jefferies} to which we
refer the reader interested in the physical origin of the problem, in a
variety of different
approaches, and in their mutual relationships.

No matter how a functional calculus is developed, one will naturally
require that it be consistent with the case in which only one operator is
considered, and that it be sufficiently flexible to handle both
commuting and noncommuting operators. In the case of a single
operator, one expects to find the usual Riesz-Dunford calculus.
When several operators are considered, it is natural to look for
functions defined on $\rr^n$ with values in noncommuting algebras
which may allow the formal treatment of non commutativity. This
set up  is naturally within the scope of monogenic functions
\cite{bds}.

Let $\rr_n$ be the real Clifford algebra, i.e. the real algebra generated by
the $n$
units $e_1,\dots ,e_n$ such that $e_ie_j+e_je_i=-2\delta_{ij}$. An
element $(x_0,x_1,\ldots,x_n)\in\rr^{n+1}$ can be naturally identified with
the element $x_0+x_1e_1+\ldots+x_ne_n\in\rr_n$. A differentiable function $f:\
U\subseteq \rr^{n+1}\to\rr_n$ is said to be monogenic if it is in
the kernel of the Dirac operator $\pp_{x_0}+e_1\pp_{x_1}+\dots
+e_n\pp_{x_n}$, where $\pp_{x_i}$ is shorthand for $\pp/\pp_{x_i}$.
The theory of such functions is fully developed in
\cite{bds} (for the case of several Dirac operators see \cite{csss}),
and the properties that such functions enjoy closely
resemble those of holomorphic functions of a single complex
variable. In particular, they can be represented by means of a
Cauchy kernel. Specifically, let $\Sigma_n$ denote the volume
of the unit $n$-sphere in
$\rr^{n+1}$, let $\omega$, $x\in \rr^{n+1}$ and $\omega\not=x$,
then if
$$
G(\omega, x)=\frac{1}{\Sigma_n}\frac{\bar\omega -\bar x}{|\omega-x|^{n+1}}
$$
where $\bar x:=x_0-x_1e_1-\ldots-x_ne_n$,   we have
$$
\int_{\partial\Omega}G(\omega, x)n(\omega)f(\omega)d\mu(\omega)=
\left\{%
\begin{array}{ll}
    f(x), & {\rm if} \ \ x\in \Omega, \\
    0, & {\rm if} \ \ x\not\in \Omega, \\
\end{array}%
\right.
$$
where  $\Omega \subset\rr^{n+1}$ is a
bounded open set with smooth boundary
$\partial\Omega$ and exterior unit normal $n(\omega)$,
and $\mu$ is the surface measure of $\partial\Omega$.
Note that the kernel $G(\omega, x)=G_\omega(x)$ can be expanded, see \cite{bds},
as
$$
G_\omega(x)=\sum_{k\geq 0} \left( \sum_{(\ell_1,...,\ell_k)} W_{\ell_1,...,\ell_k}(\omega)
V^{\ell_1,...,\ell_k}(x)\right)
$$
in the region $|x| <|\omega|$
where, for each $\omega\in \rr^{n+1}\backslash \{0\}$,
$W_{\ell_1,...,\ell_k}(\omega)=(-1)^{k}
\partial_{\omega_{\ell_1}}...\partial_{\omega_{\ell_k}}G_\omega(0)
$, $V^{\ell_1,...,\ell_k}(x)=\frac{1}{k!}\sum_{j_1,...,j_k}z_{j_1}...z_{j_k}
$, $z_j=x_je_0-x_0e_j$, and the sum is taken over
all different permutation of $\ell_1,...,\ell_k$.

Consider now an $n$-tuple $T=(T_1,...,T_n)$ of bounded linear operators acting
 on a Banach space $X$ and let $R>(1+\sqrt{2})\|\sum_{j=1}^nT_je_j\|$.
If we formally replace $z_j$ by $T_j$ in the Cauchy kernel series, it
can be shown (see \cite{jefferies}, Lemma 4.7) that
$$
G_\omega(T):=\sum_{k\geq 0} \left( \sum_{(\ell_1,...,\ell_k)} W_{\ell_1,...,\ell_k}(\omega)
V^{\ell_1,...,\ell_k}(T)\right)
$$
converges uniformly for all $\omega \in \rr^{n+1}$ such that $|\omega|\geq R$.
This fact leads to the following definition \cite{jefferies}:
{\it
the monogenic spectrum $\tilde\gamma(T)$ of the $n$-tuple
 $T$ is the complement
of the largest open set $U$ in $\rr^{n+1}$ in which the function $G_\omega(T)$
above is the restriction
of a monogenic function with domain $U$.
}

It is possible to show that if
$T$ is an $n$-tuple of noncommuting bounded linear operators satisfying
suitable reality conditions on their joint spectrum (see \cite{jefferies}), then
the function $\omega \to G_\omega(T)$
is the restriction to the region $|\omega| >(1+\sqrt{2})\|\sum_{j=1}^nT_je_j\|$
of a monogenic function defined off $\mathbb{R}^n$.

Denote with the same symbol $G_\omega(T)$ its maximal monogenic extension.
Under these hypotheses, if
$\Omega\subseteq\rr^{n+1}$ is a bounded open neighborhood
 of $\gamma(T)$
with smooth boundary and if
$f$ is a monogenic
function defined in an open neighborhood of $\overline{\Omega}$, then one can show that
the expression
$$
f(T):=\int_{\pp\Omega} G_\omega(T) n(\omega)f(\omega)d\mu(\omega)
$$
is well defined.

In the case of commuting bounded linear operators the spectrum can be determined
in an explicit way in view of the following result \cite{jefferies}:
\begin{theorem}
Let $T=(T_1,...,T_n)$ be a $n$-tuple of commuting bounded linear operator acting
 on a Banach space $X$ and suppose that
the spectrum of $T_j$ is real for all $j=1,...,n$.
Then $\gamma(T)$ is the complement in $\rr^n$ of the set of all $\lambda\in \rr^n$
for which the operator $\sum_{j=1}^n(\lambda_j\mathcal{I}-T_j)^{2}$ is invertible in ${\cal L}(X)$.
\end{theorem}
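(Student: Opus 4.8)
The plan is to produce an explicit closed form for $G_\omega(T)$ valid whenever the operators $T_j$ commute, and then to read off from it the set of $\omega$ at which it is singular. Writing $\omega=\omega_0+\omega_1e_1+\cdots+\omega_ne_n$ and recalling that replacing $z_j$ by $T_j$ in a Fueter polynomial is the substitution $x_0\mapsto 0$, $x_j\mapsto T_j$, one expects --- since for commuting operators the symmetrized polynomials $V^{\ell_1,\dots,\ell_k}(T)$ become monomials in the $T_j$, and the Cauchy-kernel expansion recalled above is the Taylor expansion at $x=0$ of $x\mapsto\Sigma_n^{-1}(\bar\omega-\bar x)|\omega-x|^{-(n+1)}$ --- that
$$
G_\omega(T)=\frac{1}{\Sigma_n}\,A(\omega)\,Q(\omega)^{-(n+1)/2},
$$
where $A(\omega):=\omega_0\mathcal I-\sum_{j=1}^n\omega_je_j+\sum_{j=1}^nT_je_j$ and $Q(\omega):=\omega_0^2\mathcal I+\sum_{j=1}^n(\omega_j\mathcal I-T_j)^2$. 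The first task is to make this rigorous for $|\omega|>R$: there $Q(\omega)$ is a polynomial in the commuting $T_j$ with $\sigma(Q(\omega))\subseteq(0,\infty)$, so $Q(\omega)^{-(n+1)/2}$ is defined by the holomorphic functional calculus, and the functional-calculus expansion of $A(\omega)Q(\omega)^{-(n+1)/2}$ in powers of the $T_j$ must be shown to reproduce, term by term, the operator series of \cite{jefferies}, Lemma~4.7. A preliminary positivity remark is used here and below: the hypothesis $\sigma(T_j)\subseteq\rr$ for all $j$, together with commutativity, forces $\sigma\big(\sum_j(\lambda_j\mathcal I-T_j)^2\big)\subseteq[0,\infty)$ for every $\lambda\in\rr^n$ --- embed $T_1,\dots,T_n,\mathcal I$ in a maximal commutative subalgebra $\mathcal A$ of $\mathcal L(X)$ (on which $\sigma_{\mathcal A}=\sigma_{\mathcal L(X)}$) and apply the Gelfand transform. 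In particular $\sigma(Q(\omega))\subseteq[\omega_0^2,\infty)$ for every $\omega\in\rr^{n+1}$.

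For the inclusion ``$\gamma(T)\subseteq\{\lambda\in\rr^n:\sum_j(\lambda_j\mathcal I-T_j)^2\text{ is not invertible}\}$'' I would prove that the closed form above is (left) monogenic in $\omega$ on the open set $\ooo:=\{\omega\in\rr^{n+1}:Q(\omega)\text{ invertible}\}$. This is a direct computation exploiting commutativity: $\partial_{\omega_0}Q=2\omega_0\mathcal I$ and $\partial_{\omega_i}Q=2(\omega_i\mathcal I-T_i)$ commute with $Q$, so $\partial_{\omega_i}(Q^{-\alpha})=-\alpha\,Q^{-\alpha-1}\partial_{\omega_i}Q$; applying the Dirac operator $\partial_{\omega_0}+\sum_ie_i\partial_{\omega_i}$ to $A(\omega)Q(\omega)^{-(n+1)/2}$, using $e_i^2=-1$ and the paravector identity $A(\omega)\,\overline{A(\omega)}=\overline{A(\omega)}\,A(\omega)=Q(\omega)$, makes all terms cancel. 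Since this monogenic function agrees with $G_\omega(T)$ for $|\omega|>R$, it is a monogenic extension of $G_\omega(T)$ to $\ooo$, so $\gamma(T)\subseteq\rr^{n+1}\setminus\ooo$; and because $\sigma(Q(\omega))\subseteq[\omega_0^2,\infty)$, the set $\rr^{n+1}\setminus\ooo$ lies in $\{\omega_0=0\}\cong\rr^n$ and coincides with $\{\lambda\in\rr^n:\sum_j(\lambda_j\mathcal I-T_j)^2\text{ not invertible}\}$.

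For the reverse inclusion I would use the algebraic identity following from the closed form and $\overline{Q}=Q$ (recall $Q$ is a scalar operator),
$$
G_\omega(T)\,\overline{G_\omega(T)}=\frac{1}{\Sigma_n^2}\,A(\omega)\,\overline{A(\omega)}\,Q(\omega)^{-(n+1)}=\frac{1}{\Sigma_n^2}\,Q(\omega)^{-n}.
$$
Suppose $\lambda\in\rr^n$ is such that $Q(\lambda)=\sum_j(\lambda_j\mathcal I-T_j)^2$ is not invertible, and suppose, for contradiction, that $G_\omega(T)$ extends to a monogenic function on a ball $V\subseteq\rr^{n+1}$ centered at $\lambda$. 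On $V\cap\{\omega_0\neq0\}\subseteq\ooo$ --- a set joined to $\{|\omega|>R\}$ within $\{\omega_0\neq0\}$, so that unique continuation applies --- the extension coincides with the closed form, hence $\omega\mapsto Q(\omega)^{-n}$ would extend continuously, and in particular remain bounded, near $\lambda$. But along $\omega=(\varepsilon,\lambda_1,\dots,\lambda_n)$ one has $Q(\omega)=\varepsilon^2\mathcal I+Q(\lambda)$ with $0\in\sigma(Q(\lambda))\subseteq[0,\infty)$, so $\|Q(\omega)^{-1}\|\geq r\big(Q(\omega)^{-1}\big)=1/\mathrm{dist}\big(0,\sigma(Q(\omega))\big)=\varepsilon^{-2}$, whence $\|Q(\omega)^{-n}\|\geq\varepsilon^{-2}/\|Q(\omega)\|^{n-1}\to\infty$ as $\varepsilon\to0$, a contradiction. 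Hence $\lambda$ cannot lie in the domain of the maximal monogenic extension, i.e.\ $\lambda\in\gamma(T)$; combining the two inclusions proves the theorem. (For $n=1$ this reduces to $\gamma(T)=\sigma(T_1)$, in accordance with the Riesz--Dunford calculus.)

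The step I expect to be the main obstacle is the first one: converting the formal substitution ``$z_j\mapsto T_j$'' into the genuine identity $G_\omega(T)=\Sigma_n^{-1}A(\omega)Q(\omega)^{-(n+1)/2}$ on $\{|\omega|>R\}$. This requires handling the symmetrization in the Fueter polynomials, controlling the norm convergence of the resulting operator series, and checking that the holomorphic-functional-calculus expansion of $Q(\omega)^{-(n+1)/2}$ matches that series; the half-integer exponent when $n$ is even also forces care with the branch of $z^{-(n+1)/2}$, which is precisely the role of the positivity statement $\sigma(Q(\omega))\subseteq[0,\infty)$ established at the outset.
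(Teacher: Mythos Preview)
The paper does not give its own proof of this theorem: it is quoted in the Introduction as a known result from \cite{jefferies}, with no argument supplied. There is therefore nothing to compare your proposal against in this paper. What the paper does add, immediately after the statement, is exactly the closed form you are aiming at in the odd case,
\[
G_\omega(T)=\frac{1}{\Sigma_n}\Big(\omega_0^2\mathcal I+\sum_{j=1}^n(\omega_j\mathcal I-T_j)^2\Big)^{-(n+1)/2}\overline{(\omega\mathcal I-T)},
\]
together with the observation that its singular set is $\{\omega_0=0,\ 0\in\sigma(\sum_j(\omega_j\mathcal I-T_j)^2)\}$; so your strategy is in the spirit of the paper's own discussion, though the paper stops at a remark and defers the proof to \cite{jefferies}.

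On the substance of your sketch: the two inclusions are organized correctly, and the blow-up argument for the reverse inclusion via $\|Q(\omega)^{-n}\|\ge \|Q(\omega)^{-1}\|/\|Q(\omega)\|^{n-1}$ is clean. Two points deserve care. First, your Gelfand-type step (``embed in a maximal commutative subalgebra and apply the Gelfand transform'') is fine for a commutative \emph{Banach} algebra---you do not need a $C^*$-structure---but you should say explicitly that maximality gives inverse-closedness, so that $\sigma_{\mathcal A}(T_j)=\sigma_{\mathcal L(X)}(T_j)\subseteq\rr$ and hence each $\widehat{T_j}$ is real-valued; from this $\sigma(Q(\omega))\subseteq[\omega_0^2,\infty)$ follows. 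Second, and this is the step you yourself flag, the identification of the operator series $G_\omega(T)$ with $\Sigma_n^{-1}A(\omega)Q(\omega)^{-(n+1)/2}$ on $\{|\omega|>R\}$ is genuine work: for even $n$ the half-integer power must be defined via the holomorphic functional calculus using the branch on $\cc\setminus(-\infty,0]$, which is legitimate precisely because of the positivity $\sigma(Q(\omega))\subseteq[\omega_0^2,\infty)\subseteq(0,\infty)$ when $\omega_0\neq0$. Once that identity is in hand, your monogenicity computation and unique-continuation argument on the connected components $\{\omega_0>0\}$ and $\{\omega_0<0\}$ go through, and the proof closes as you indicate.
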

We observe that the condition of invertibility of
$\sum_{j=1}^n(\lambda_j\mathcal{I}-T_j)^2$
gives an eigenvalue equation which, at least in some cases, can be easily written. If
for example
$n$ is odd, it is possible to write explicitly the Cauchy kernel as
$$
G_\omega(T)=\frac{1}{\Sigma_n}  \left( \omega_0^2\mathcal{I}+\sum_{j=1}^{n}
(\omega_j\mathcal{I}-T_j)^2\right)^{(-n-1)/2}
\overline{( \omega\mathcal{I} -T)},
$$
whose singularities lie on the set
$$
\left\{(0,\omega_1,\ldots ,\omega_n)\in\rr^{n+1}\ | \ 0\in\sigma\left(
\sum_{j=1}^n (\omega_j\mathcal{I}-T_j)^2\right)\right\}.
$$
In the case the operators $T_j$ do not commute the term
$\frac{1}{\Sigma_n}  |\omega\mathcal{I}-T|^{-n-1}\overline{( \omega\mathcal{I} -T)}$
is not the sum of the Cauchy kernel series so that it is much more difficult
to determine the the spectrum.

In this paper we use a different approach to functional calculus.
Specifically we replace the
use of monogenic functions with the new concept, see \cite{slicecss},
of slice-monogenic functions.
These functions, whose  definition we recall in section 2, have the great advantage that
polynomials as well as power
series  are special cases of slice-monogenic functions (this is in contrast to the
usual definition of monogenic functions).
The key observation which will make our approach successful
is the fact that the sum
of the so-called S-resolvent operator series (see (\ref{Sresolv}))
is a function  which can be utilized even in parts of the
space where the series does not
converge. This remark will make it possible to compute the spectrum also when
the operators do not commute.
In fact, the term $-(T^2-2T Re[s]+|s|^2\mathcal{I})^{-1}(T-\overline{s}\mathcal{I})$
is the sum of the series (\ref{Sresolv}) also when the operators $T_j$ do not commute,
and the singularity of the the Cauchy kernel
(the analogue of the maximal extension $G_\omega(T)$) is given by
$(T^2-2T Re[s]+|s|^2\mathcal{I})v=0$.

The outline of the paper is the following: in section 2 we introduce the concept of
slice-monogenic functions and we recall the properties we need to develop
our functional calculus. In section 3 we define the S-resolvent operator and we give
the notion of $S$-spectrum, we show the S-resolvent equation and we develop the functional
calculus for bounded operators. Finally in section 4 we treat the case of unbounded operators.

{\it Acknowledgements} The first and third authors are grateful to Chapman University for the
hospitality during the period in which this paper was written. They are also indebted to
G.N.S.A.G.A. of INdAM and
the Politecnico di Milano for partially supporting their visit. The authors thank
the anonymous referee for the valuable suggestions to improve the paper.

\section{ Slice monogenic functions}
In this section we collect the basic results on the theory of slice monogenic functions,
developed by
the authors in \cite{slicecss}, to which we refer for the missing proofs in this section.
Let $\rr_n$ be the real Clifford algebra over $n$ units $e_1,\dots,e_n$ such that $e_ie_j+e_je_i=-2\delta_{ij}$. An element in the
Clifford algebra will be denoted by $\sum_A e_Ax_A$ where
$A=i_1\ldots i_r$, $i_\ell\in \{1,2,\ldots, n\}$, $i_1<\ldots <i_r$ is a multi-index
and $e_A=e_{i_1} e_{i_2}\ldots e_{i_r}$. An element $\unx\in\rr^n$ can be identified
with a 1-vector in the Clifford algebra: $(x_1,x_2,\ldots,x_n)\mapsto \unx=
x_1e_1+\ldots+x_ne_n$. A function $f:\ U\subseteq \rr^n\to\rr_n$ is seen as
a function $f(\unx)$ of $\unx$.
\par\noindent
An element in $\rr_n^0\oplus\rr_n^1$ will be written as
 $$
 x=x_0+\unx=x_0+ \sum_{j=1}^nx_je_j.
 $$
  In the sequel  the real part  $x_0$ of $x$ will be also denoted by $Re [x]$.
  \par\noindent
Let us denote by $\mathbb{S}$ the sphere of unit 1-vectors in $\mathbb{R}^n$, i.e.
$$
\mathbb{S}=\{ \unx=e_1x_1+\ldots +e_nx_n\ |\  x_1^2+\ldots +x_n^2=1\}.
$$
The complex line $\mathbb{R}+I\mathbb{R}$ passing through $1$ and $I\in \mathbb{S}$ will be denoted by $L_I$.
\par\noindent
An element belonging to $L_I$ will be denoted by $u+Iv$, for $v$, $v\in \mathbb{R}$.
\par\noindent
Observe that $L_I$, for every $I\in \mathbb{S}$, is a real subspace of $\rr^{n+1}$ isomorphic to the complex plane.
\begin{definition} Let $U\subseteq\rr^{n+1}$ be a domain and let
$f:\ U\to\rr_n$ be a real differentiable function. Let
$I\in\mathbb{S}$ and let $f_I$ be the restriction of $f$ to the
complex line $L_I$.
We say that $f$ is a (left) slice-monogenic function if for every
$I\in\mathbb{S}$
$$
\frac{1}{2}\left(\frac{\partial }{\partial u}+I\frac{\partial
}{\partial v}\right)f_I(u+Iv)=0.
$$
\end{definition}
Analogously, it is possible to define a notion of right slice-monogenicity which gives
a theory  equivalent to the one
left  slice-monogenic functions.
In the sequel, unless otherwise stated, we will consider monogenicity on the left and,
for simplicity, sometimes we will denote by
$\overline{\partial}_I$ the operator
$\frac{1}{2}\left(\frac{\partial }{\partial u}+I\frac{\partial
}{\partial v}\right)$ and we will refer to left slice monogenic
functions as s-monogenic functions. We will also introduce a notion of $I$-derivative
by means of the operator
$$
{\partial}_I:=\frac{1}{2}\left(\frac{\partial }{\partial u}-I\frac{\partial
}{\partial v}\right).
$$
\begin{remark}{\rm The s-monogenic functions on $U\subseteq\rr^{n+1}$ form a
right module $\Er (U)$.
}
\end{remark}
\begin{remark}{\rm  For each $a_m\in \mathbb{R}_n$ the monomials $x \mapsto x^m a_m$
are  left s-monogenic, while the monomials $x \mapsto a_m x^m$ are  right s-monogenic.
Thus also polynomials $\sum_{m=0}^N x^ma_m$
are left s-monogenic  and  any power series $\sum_{m=0}^{+\infty} x^ma_m$
is left s-monogenic in
its domain of convergence. The function $R(x)=(x-y_0)^{-m}$, $m\in\mathbb{N}$
is s-monogenic (left and right) if and only if $y_0\in\rr$.}
\end{remark}
\begin{definition}
Let $U$ be a domain in $\rr^{n+1}$ and let $f:\ U\to\rr_n$ be an s-monogenic function.
Its s-derivative $\pp_s$ is defined as
\begin{equation}\label{s-derivative}
\pp_s(f)=\left\{\begin{array}{ll}
\pp_I(f)(x),\quad & x=u+Iv,\ v\not=0,\\
\pp_uf (u),\quad & {x=u\in\rr}.
\end{array}\right.
\end{equation}
\end{definition}
\noindent
Note that the definition of derivative is well posed because it is applied
only to s-monogenic functions.
Furthermore, any holomorphic function
$f:\ \Delta (0,R)\to \cc$ can be extended (uniquely, up to a choice of an order
for the elements in the basis of $\rr_n$) to an s-monogenic function
$\tilde f: B(0,R)\to\rr_n$.

A key fact is that any s-monogenic function can be developed
into power series and also that it admits a Cauchy integral representation.
\begin{proposition}
If $B=B(x_0,R)\subseteq\rr^{n+1}$ is a ball centered in a real point $x_0$ with radius $R>0$, then
$f:\ B\to\rr_n$
is s-monogenic if and only if it has a series expansion of the form
\begin{equation}\label{serie}
f(x)=\sum_{m\geq 0}x^m\   \frac{1}{m!} \frac{\pp^mf}{\pp u^m}(x_0)
\end{equation}
converging on $B$.
\end{proposition}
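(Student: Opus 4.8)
My plan is to reduce both implications to classical one-variable complex analysis, carried out separately on each complex line $L_I$, and then to patch the slicewise results together, using that $x_0$ is real.

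The implication $(\Leftarrow)$ I would dispose of quickly. If $f$ is given on $B$ by the series $\sum_{m\ge 0}(x-x_0)^m a_m$ of (\ref{serie}) (so $a_m=\tfrac1{m!}\tfrac{\pp^m f}{\pp u^m}(x_0)\in\rr_n$) and this series converges on $B$, then for every $I\in\si$ the restriction $f_I$ is, on the disc $L_I\cap B=\{u+Iv:(u-x_0)^2+v^2<R^2\}$, a convergent power series in the complex variable $z=u+Iv$ with coefficients in $\rr_n$; such a series is holomorphic (differentiate term by term), hence $\ov\pp_I f_I=0$, and since this holds for all $I$, $f$ is s-monogenic. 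This is essentially the fact, recalled in Section 2, that power series are s-monogenic in their domain of convergence.

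The substance is the implication $(\Rightarrow)$. Fix $I\in\si$ and make $\rr_n$ into a complex vector space $V_I$ by declaring multiplication by the imaginary unit to be left multiplication by $I$, which is legitimate since $I^2=-1$. Then $\ov\pp_I f_I=0$, i.e.\ $\pp_u f_I=-I\pp_v f_I$, is exactly the Cauchy--Riemann system for $f_I$ seen as a map from the disc $L_I\cap B\subseteq L_I\cong\cc$ into $V_I$; since $f$ is differentiable, $f_I$ is holomorphic, hence complex-analytic, and has the usual convergent expansion $f_I(z)=\sum_{m\ge 0}(z-x_0)^m\,\tfrac1{m!}\,(f_I)^{(m)}(x_0)$ on $L_I\cap B$, where $(f_I)^{(m)}$ is the $m$-th complex derivative and the scalar $(z-x_0)^m$ — which under our identification lies in $L_I\subseteq\rr_n$ — multiplies the coefficient on the left (that is how the complex scalar action on $V_I$ reads in Clifford terms). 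Now the complex derivative along $L_I$ is the operator $\pp_I=\tfrac12(\pp_u-I\pp_v)$, and on an s-monogenic function the relation $\pp_u f_I=-I\pp_v f_I$ forces $\pp_I f_I=\pp_u f_I$; since $\pp_I$ commutes with $\ov\pp_I$, $\pp_I f_I$ again satisfies $\ov\pp_I(\pp_I f_I)=0$ on $L_I$, so by induction $(f_I)^{(m)}=\pp_I^m f_I=\pp_u^m f_I$. The decisive point is then that $\pp_u^m f_I$ restricted to the real axis depends only on $f|_\rr$ (every $L_I$ contains $\rr$, along which the coordinate $u$ is just $x_0$), so $(f_I)^{(m)}(x_0)$ is independent of $I$; write it as $m!\,a_m$. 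Hence $f_I(u+Iv)=\sum_{m\ge 0}(u+Iv-x_0)^m a_m$ with $I$-independent coefficients, and since every point $x=u+\unx$ of $B$ lies on some $L_I$ (take $I=\unx/|\unx|$ if $\unx\ne 0$, any $I$ otherwise), these expansions patch to $f(x)=\sum_{m\ge 0}(x-x_0)^m a_m$ on all of $B$, i.e.\ to (\ref{serie}).

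Finally I would check convergence on all of $B$: restricting to the real segment $(x_0-R,x_0+R)$ shows that $\sum_m |a_m|\,t^m$ has radius of convergence at least $R$, and then submultiplicativity of the norm on $\rr_n$ together with $|x-x_0|<R$ on $B$ gives absolute and locally uniform convergence there. The step I expect to be the real obstacle is not a single estimate but this coefficient identification and patching: one must be sure the slicewise complex derivatives $(f_I)^{(m)}(x_0)$ are genuinely independent of $I$ — this is what the chain $(f_I)^{(m)}=\pp_I^m f_I=\pp_u^m f_I$ and the reality of $x_0$ are for — and that the expansion obtained slice by slice is consistent on the shared real axis; once that is secured, the rest is standard one-variable complex analysis plus a routine convergence bound.
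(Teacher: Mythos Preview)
The paper does not actually prove this proposition: it is one of the results imported from \cite{slicecss} (``we refer for the missing proofs in this section''), so there is no in-paper argument to compare against. Your strategy---slice by slice reduction to classical one-variable complex analysis, using left multiplication by $I$ to turn $\rr_n$ into a complex vector space, and then identifying the coefficients via $\pp_I^m f_I=\pp_u^m f_I$ at the real center---is exactly the natural argument and is correct; this is essentially how such results are obtained in the slice-monogenic literature.

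Two minor remarks. First, the displayed formula in the paper reads $x^m$ rather than $(x-x_0)^m$; for a general real center $x_0$ the latter is what is meant, and your reading is the right one (since $x_0\in\rr$ commutes with everything, the two forms are of course interconvertible). Second, your convergence step via the real segment is fine but slightly roundabout: you already know that on any fixed slice $L_I$ the Taylor series of $f_I$ converges on the whole disc of radius $R$, which directly gives $\limsup_m |a_m|^{1/m}\le 1/R$ and hence absolute convergence on all of $B$ (using that for paravectors $|(x-x_0)^m|=|x-x_0|^m$). With those cosmetic adjustments the proof is complete.
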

\noindent
Given an element $x=x_0+\unx\in\rr^{n+1}$ let us set
$$
I_x=\left\{\begin{array}{l}
\displaystyle\frac{\unx}{|\unx|}\quad{\rm if}\ \unx\not=0,\\
{\rm any\ element\ of\ } \mathbb{S}{\rm\ otherwise.}\\
\end{array}
\right.
$$
We have the following:
\begin{theorem} Let $B=B(0,R)\subseteq\rr^{n+1}$ be a ball with center in $0$
and radius $R>0$ and let $f:\ B\to\rr_n$ be an s-monogenic function. If $x\in B$ then
$$
f(x)=\frac{1}{2\pi } \int_{\pp\Delta_x(0,r)} (\zeta -x)^{-1}\,
d\zeta_{I_x}{f(\zeta)}
$$
where $\zeta\in L_{I_x} \bigcap B$, $d\zeta_{I_x}=-d\zeta{I_x}$
and $r>0$ is such that
$$
\overline{\Delta_x(0,r)}=\{u+I_x v\ |\ u^2+v^2\leq r^2\}
$$
contains $x$ and is contained in $B$.
\end{theorem}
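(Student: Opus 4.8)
The plan is to reduce the identity to the classical Cauchy integral formula for holomorphic functions with values in a finite–dimensional complex vector space, carried out on the complex line $L_{I_x}$.

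Fix $x=x_0+\unx\in B$ and set $I:=I_x$, so that $x=x_0+|\unx|\,I$ lies on $L_I$; put $\Delta:=B\cap L_I$, the open disc of $L_I$ with centre $0$ and radius $R$. By hypothesis the closed disc $\overline{\Delta_x(0,r)}$ contains $x$ in its interior and is contained in $\Delta$. The first observation is that $L_I=\rr+I\rr$ is a commutative subalgebra of $\rr_n$ and, since $I^2=-1$, the map $u+Iv\mapsto u+iv$ is an isomorphism of real algebras $L_I\to\cc$; in particular, for $\zeta\in\partial\Delta_x(0,r)$ the element $\zeta-x\in L_I$ is nonzero, hence invertible, with $(\zeta-x)^{-1}\in L_I$. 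Secondly, left multiplication by $I$ is a real–linear endomorphism of $\rr_n$ squaring to $-\mathrm{Id}$, so it turns $\rr_n$ into a complex vector space $V_I$ (of complex dimension $2^{n-1}$) on which the scalar $u+iv\in\cc$ acts as left multiplication by $u+Iv\in L_I$. Under these identifications the restriction $f_I=f|_{\Delta}\colon\Delta\to V_I$ is differentiable, and the s-monogenicity condition $\frac12(\partial_u+I\partial_v)f_I=0$ is exactly the Cauchy--Riemann equation $\partial_{\bar\zeta}f_I=0$ for $V_I$-valued functions; hence $f_I$ is holomorphic on $\Delta$ in the ordinary sense.

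Now apply the classical vector-valued Cauchy formula on $\Delta_x(0,r)$: since $x$ is interior to this disc and $f_I$ is holomorphic on a neighbourhood of its closure,
$$
f_I(x)=\frac{1}{2\pi i}\int_{\partial\Delta_x(0,r)}\big[(\zeta-x)^{-1}\,d\zeta\big]\,f_I(\zeta),
$$
where we have grouped the two complex scalars $(\zeta-x)^{-1}$ and $d\zeta=du+i\,dv$ to the left of the vector $f_I(\zeta)$. It remains to transcribe this into $\rr_n$. Under the dictionary above, $d\zeta$ becomes $du+I\,dv$, the constant $\frac{1}{2\pi i}=-\frac{i}{2\pi}$ becomes left multiplication by $-\frac{I}{2\pi}$, and $(\zeta-x)^{-1}$ is the inverse computed inside $L_I$. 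Because $-\frac{I}{2\pi}$, $(\zeta-x)^{-1}$ and $du+I\,dv$ all belong to the commutative algebra $L_I$, they may be multiplied in any order among themselves, and
$$
-\frac{I}{2\pi}\,(\zeta-x)^{-1}\,d\zeta=\frac{1}{2\pi}\,(\zeta-x)^{-1}\big(-I\,d\zeta\big)=\frac{1}{2\pi}\,(\zeta-x)^{-1}\big(-d\zeta\,I\big)=\frac{1}{2\pi}\,(\zeta-x)^{-1}\,d\zeta_{I_x},
$$
and this element of $L_I$ then acts on $f_I(\zeta)\in\rr_n$ by left multiplication. Since $f_I$ agrees with $f$ on $L_I\cap B$, we obtain
$$
f(x)=\frac{1}{2\pi}\int_{\partial\Delta_x(0,r)}(\zeta-x)^{-1}\,d\zeta_{I_x}\,f(\zeta),
$$
which is the claimed identity.

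The only genuinely delicate point is the bookkeeping of the products: all of $(\zeta-x)^{-1}$, $d\zeta_{I_x}$ and the constant $I$ must be kept to the \emph{left} of $f(\zeta)$, since they lie in $L_I$ and need not commute with a general Clifford number; inside $L_I$, however, everything commutes, which is precisely what lets one absorb the constant $-\frac{I}{2\pi}$ and the differential $d\zeta$ into the single symbol $d\zeta_{I_x}$. One should also record that the right-hand side is independent of the admissible radius $r$ --- and, when $\unx=0$, of the chosen $I\in\mathbb{S}$ --- which follows from Cauchy's theorem applied on the two-dimensional slice $L_I$; alternatively, the whole statement can be read off from the power series expansion (\ref{serie}) of the Proposition, using that $\frac{1}{2\pi}\int_{\partial\Delta_x(0,r)}(\zeta-x)^{-1}\,d\zeta_{I}\,\zeta^m=x^m$ for every $m\ge 0$ (expand $(\zeta-x)^{-1}$ as a geometric series in $L_I$ and integrate termwise) together with the fact that the coefficients $\frac{1}{m!}\partial_u^m f(0)$ stand to the right and hence pass through the integral.
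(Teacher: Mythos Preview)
Your argument is correct. The paper itself does not prove this theorem: it is quoted from \cite{slicecss}, and all proofs in this preliminary section are explicitly deferred there. The route you take---identifying $L_{I_x}$ with $\cc$, endowing $\rr_n$ with the complex structure given by left multiplication by $I_x$, recognizing the s-monogenicity condition as the Cauchy--Riemann equation for $V_I$-valued maps, and then invoking the classical vector-valued Cauchy formula---is exactly the standard reduction used to establish such slice-wise Cauchy formulas, and is undoubtedly the argument intended in \cite{slicecss}. Your bookkeeping of the noncommutative products (keeping all $L_I$-scalars to the left of $f(\zeta)$, while freely permuting them among themselves) is the essential point, and you handle it cleanly.

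One cosmetic remark: the hypothesis in the statement says only that $\overline{\Delta_x(0,r)}$ \emph{contains} $x$, not that $x$ lies in its interior; you silently upgraded this. Since $\overline{\Delta_x(0,r)}\subset B$ is strict, one can always enlarge $r$ slightly to put $x$ strictly inside, so this is harmless---but it would be worth one sentence to say so.
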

\begin{remark}{\rm An analogue statement holds for regular functions in an open ball centered in a real point $x_0$.}
\end{remark}
\begin{remark}{\rm
If $f$ is an s-monogenic function on a domain $U$ and $\gamma:\ [a,b]\to \rr^{n+1}$ is a curve, then the integral
$\int_\gamma f(\xi) d\xi$ is defined as $\int_a^bf(\gamma(t))\ \gamma'(t)\ dt$. In particular, the curve $\gamma$ can have values on a complex plane $L_I$.
}
\end{remark}
The key ingredient to define a functional calculus is what we
call noncommutative Cauchy kernel
series.
\begin{definition}\label{Cauchykernel}
Let $x=Re[x]+\unx$,
$s=Re[s]+\uns$
be such that $sx\not=xs$. We will call noncommutative Cauchy kernel series the following expansion
\begin{equation}\label{cauchykernseries}
S^{-1}(s,x):=\sum_{n\geq 0}x^ns^{-1-n}
\end{equation}
defined for $|x|<|s|$.
\end{definition}

\begin{theorem} (See \cite{slicecss})\label{inversoalgebrico}
Let $x=Re[x]+\unx$,
$s=Re[s]+\uns$
be such that $xs\not= sx$. Then
$$
\sum_{n\geq 0}x^ns^{-1-n}=-(x^2-2xRe[s]+|s|^2)^{-1}(x-\overline{s})
$$
for $|x|<|s|$.
\end{theorem}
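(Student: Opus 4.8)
The plan is to multiply the series $S^{-1}(s,x)=\sum_{n\ge 0}x^ns^{-1-n}$, which converges absolutely for $|x|<|s|$ (so term-by-term manipulations are allowed), on the left by the polynomial $p(x):=x^2-2Re[s]\,x+|s|^2$, and to observe that all but finitely many terms of the resulting series cancel. The crucial structural point is that $Re[s]$ and $|s|^2$ are \emph{real} numbers, so $p(x)$ is a polynomial in $x$ with real coefficients and therefore commutes with $x$ and with every $x^n$. Besides this, the only identities I would use are the elementary ones for the paravector $s=Re[s]+\uns$, namely $s\overline s=\overline s s=|s|^2$, $s+\overline s=2Re[s]$ and $s^{-1}=\overline s/|s|^2$, together with their consequence $s^2-2Re[s]\,s+|s|^2=0$ (and the analogous $x^2=2Re[x]\,x-|x|^2$). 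I would also note at the start that $xs\ne sx$ forces $\unx\ne 0$ and $\uns\ne 0$, since real elements are central.

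Distributing and reindexing gives
\[
p(x)\,S^{-1}(s,x)=\sum_{n\ge 0}x^{n+2}s^{-1-n}-2Re[s]\sum_{n\ge 0}x^{n+1}s^{-1-n}+|s|^2\sum_{n\ge 0}x^ns^{-1-n},
\]
and I would then collect the coefficient of each power $x^k$. For $k\ge 2$ the coefficient is $x^ks^{-1-k}\bigl(s^2-2Re[s]\,s+|s|^2\bigr)=0$; for $k=1$ it is $x\bigl(-2Re[s]\,s^{-1}+|s|^2s^{-2}\bigr)$, which reduces to $-x$ after substituting $s^{-1}=\overline s/|s|^2$ and using $\overline s-2Re[s]=-s$; and for $k=0$ it is $|s|^2s^{-1}=\overline s$. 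Hence the whole computation collapses to the identity $p(x)\,S^{-1}(s,x)=\overline s-x$.

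To finish I would show that $p(x)$ is invertible. Using $x^2=2Re[x]\,x-|x|^2$ one rewrites $p(x)=2\bigl(Re[x]-Re[s]\bigr)x+\bigl(|s|^2-|x|^2\bigr)$, which is again a paravector; its $1$-vector part is $2\bigl(Re[x]-Re[s]\bigr)\unx$. If that part is zero, then $\unx\ne 0$ forces $Re[x]=Re[s]$ and hence $p(x)=|s|^2-|x|^2>0$ (this is where $|x|<|s|$ is used); if it is nonzero, then $p(x)\ne 0$ at once. In either case $p(x)$ is a nonzero paravector, hence invertible in $\rr_n$ with $p(x)^{-1}=\overline{p(x)}/|p(x)|^2$, and multiplying $p(x)\,S^{-1}(s,x)=\overline s-x$ on the left by $p(x)^{-1}$ gives $S^{-1}(s,x)=-\bigl(x^2-2xRe[s]+|s|^2\bigr)^{-1}(x-\overline s)$, as desired.

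The argument is essentially bookkeeping once one hits on the idea of multiplying by the real quadratic $p(x)$; the only delicate points are the legitimacy of the rearrangement (absolute convergence for $|x|<|s|$) and — the genuine obstacle — the invertibility of $p(x)$, which is precisely where the hypothesis $xs\ne sx$ must be invoked, through $\unx\ne 0$. One should also take care with the order of the factors when verifying the $k=0$ and $k=1$ coefficients, since $x$ and $s$ do not commute.
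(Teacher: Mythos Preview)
Your argument is correct. The paper does not supply a proof of this statement (it refers to \cite{slicecss}), but the computation you carry out---left-multiplying the series by the real-coefficient quadratic $p(x)=x^2-2Re[s]\,x+|s|^2$ and telescoping via the identity $s^2-2Re[s]\,s+|s|^2=0$ of Proposition~\ref{equaurea}---is exactly the method the paper uses to prove the operator analogue in Theorem~\ref{thsmeno}.
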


We will call  the expression
$(x^2-2xRe[s]+|s|^2)^{-1}(x-\overline{s})$, defined for $x^2-2xRe[s]+|s|^2\not=0$,
 {\em noncommutative Cauchy kernel}.
 Therefore note that the noncommutative Cauchy kernel
is defined on a  set which is larger then the set $\{(x,s)\ : \ |x|<|s|\}$ where
the noncommutative Cauchy kernel series is defined.
Since $x=\overline{s}$ is a solution of $\overline{s}^2-2\overline{s}Re[s]+|s|^2=0$,
one may wonder if
the factor $(x-\bar s)$ can be simplified from the expression of the noncommutative
Cauchy kernel. However, as shown in
the next result, this is not possible and the noncommutative Cauchy kernel
 cannot be extended to a continuous function in
$x=\overline{s}$.
With an abuse of notation, we will denote
the noncommutative Cauchy kernel series and the noncommutative Cauchy kernel with the same symbol
$S^{-1}(s,x)$.
\begin{theorem}
 Let $S^{-1}(s,x)$ be the noncommutative Cauchy kernel
 with $xs\not= sx$. Then $S^{-1}(s,x)$ is irreducible and
$
\lim_{x\to \overline{s}}S^{-1}(s,x)
$
does not exist.
\end{theorem}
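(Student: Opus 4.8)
The plan is to reduce the whole statement to one explicit limit computation: show that $S^{-1}(s,x)$ has different limits as $x\to\overline s$ along different rays, and then observe that any cancellation of the factor $x-\overline s$ would force an honest limit to exist.

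First I would fix coordinates. Write $x=x_0+\underline x$, $s=s_0+\underline s$ with $x_0,s_0\in\rr$ and $\underline x,\underline s$ $1$-vectors; recall $\underline x^2=-|\underline x|^2$, that $\underline u\,\underline v+\underline v\,\underline u=-2\langle u,v\rangle$ for $1$-vectors, and that $xs\neq sx$ exactly when $\underline x$ and $\underline s$ are linearly independent. A routine expansion gives that
\[
Q_s(x):=x^2-2x\,Re[s]+|s|^2=\big((x_0-s_0)^2-|\underline x|^2+|\underline s|^2\big)+2(x_0-s_0)\,\underline x
\]
is a paravector, so $Q_s(x)^{-1}=\overline{Q_s(x)}\,/\,|Q_s(x)|^2$ off its zero set $\{x_0=s_0,\ |\underline x|=|\underline s|\}$; since this sphere contains $\overline s$, where $x-\overline s$ vanishes as well, $S^{-1}(s,x)=Q_s(x)^{-1}(x-\overline s)$ is a genuine $0/0$ indeterminacy at $\overline s$.

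The heart of the argument is the ray computation. Approach $\overline s$ along $x(\epsilon)=\overline s+\epsilon w$, $\epsilon\downarrow0$, with $w=w_0+\underline w$ chosen so that $w_0\neq0$ and $\underline w\notin\rr\underline s$; the second condition makes $x(\epsilon)s\neq sx(\epsilon)$ for small $\epsilon>0$, so the whole ray lies in the admissible region. Substituting $x_0-s_0=\epsilon w_0$, $\underline x=-\underline s+\epsilon\underline w$ into the display above yields
\[
Q_s\big(x(\epsilon)\big)=2\epsilon\,A_w+\epsilon^2 B_w,\qquad A_w:=\langle\underline s,\underline w\rangle-w_0\underline s,\quad B_w:=w_0^2-|\underline w|^2+2w_0\underline w,
\]
while $x(\epsilon)-\overline s=\epsilon w$. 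Here $|A_w|^2=\langle\underline s,\underline w\rangle^2+w_0^2|\underline s|^2>0$ because $w_0\neq0$ and $\underline s\neq0$, so $A_w$ is invertible and
\[
S^{-1}\big(s,x(\epsilon)\big)=Q_s\big(x(\epsilon)\big)^{-1}\big(x(\epsilon)-\overline s\big)=\tfrac12\big(A_w+\tfrac{\epsilon}{2}B_w\big)^{-1}w\ \longrightarrow\ \tfrac12 A_w^{-1}w
\]
as $\epsilon\to0^+$. Thus along each admissible ray the limit exists and equals $\tfrac12\big(\langle\underline s,\underline w\rangle-w_0\underline s\big)^{-1}(w_0+\underline w)$, which genuinely depends on $w$; a comparison of two admissible directions finishes the job. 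For a concrete instance take $n=2$, $s=e_1$, so $\overline s=-e_1$: the rays $w=1+e_2$ and $w=1+e_1+e_2$ give limits $\tfrac12(e_1+e_1e_2)$ and $\tfrac14(2e_1+e_2+e_1e_2)$, which differ. Hence $\lim_{x\to\overline s}S^{-1}(s,x)$ does not exist.

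Irreducibility then follows. If $x-\overline s$ could be cancelled from $S^{-1}(s,x)$, the only useful cancellation would be a left factorization $Q_s(x)=(x-\overline s)\,p(x)$ with $p$ of degree $\le1$ with Clifford coefficients, valid as an identity on the admissible domain; but then $S^{-1}(s,x)=p(x)^{-1}$ would be continuous near $x=\overline s$ (where $p(\overline s)$ is invertible), contradicting the ray computation. The right factorization $Q_s(x)=p(x)(x-\overline s)$ is likewise impossible, since $Q_s(x)(x-\overline s)^{-1}$ has a direction-dependent limit at $\overline s$ by the same expansion. One may add, as the algebraic reason, that the naive identity $(x-s)(x-\overline s)=x^2-2x\,Re[s]+|s|^2-(\underline s\,\underline x-\underline x\,\underline s)$ reduces to $Q_s(x)$ only when $\underline s$ and $\underline x$ commute, i.e. never on the domain $xs\neq sx$. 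I expect the one delicate point to be the $\epsilon$-expansion of $Q_s(x(\epsilon))$ together with the invertibility of the leading term $A_w$: choosing the approach direction with a nonzero real part and a part of $\underline w$ transverse to $\underline s$ is exactly what makes the ray admissible and $A_w$ invertible simultaneously.
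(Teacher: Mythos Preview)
Your argument is correct, and it is organized differently from the paper's. The paper treats the two claims separately: for irreducibility it attempts a left factorization $x^2-2xRe[s]+|s|^2=(x-\overline s)(x-r)$ with a constant $r$, solves for $r$, and finds that $r$ depends on $x$; for the limit it rewrites $S^{-1}(s,\overline s+\varepsilon)=(\varepsilon^{-1}\overline s\,\varepsilon+\overline s+\varepsilon-2Re[s])^{-1}$ and observes that the conjugation term $\varepsilon^{-1}\overline s\,\varepsilon$ has no limit as $\varepsilon\to0$.

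You instead compute the limit along each ray $x(\epsilon)=\overline s+\epsilon w$ explicitly, obtain the closed form $\tfrac12 A_w^{-1}w$ with $A_w=\langle\underline s,\underline w\rangle-w_0\underline s$, and exhibit two admissible directions giving distinct values. This is sharper than the paper's limit argument, since it yields the actual directional limits rather than merely asserting nonconvergence of one term. You then deduce irreducibility as a corollary: any factorization $Q_s(x)=(x-\overline s)p(x)$ would force $S^{-1}(s,x)=p(x)^{-1}$, hence (by continuity of $p$ and of inversion along the rays, where the ray limits are finite) a single limit at $\overline s$, contradicting your ray computation. This is a genuinely different and somewhat more economical route: one computation does both jobs. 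The paper's algebraic approach to irreducibility, on the other hand, pins down exactly why the naive monic factor $x-r$ fails, which your final remark about $(x-s)(x-\overline s)=Q_s(x)-[\underline x,\underline s]$ recovers.

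One small point to tighten: where you say ``where $p(\overline s)$ is invertible'', you have not justified this a priori. The clean way is to note that along your rays $S^{-1}(s,x(\epsilon))$ has a \emph{finite} limit, so $p(x(\epsilon))$ must converge to its inverse; since $p$ is a polynomial this forces $p(\overline s)$ to equal the (direction-dependent) quantity $2w^{-1}A_w$, which is already a contradiction.
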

\begin{proof}
We prove that we cannot find a degree one polynomial $Q(x)$ such
that
$$
x^2-2xRe[s]+|s|^2=(s+x-2 \ Re [s])Q(x).
$$
The existence of $Q(x)$ would allow the simplification
$$
S^{-1}(s,x)=Q^{-1}(x)(s+x-2 \ Re [s])^{-1}(s+x-2 \ Re
[s])=Q^{-1}(x).
$$
We proceed as follows: first of all note that $Q(x)$ has to be a
monic polynomial of degree one, so we set
$$
Q(x)=x-r
$$
where   $r=r_0+\sum_{j=1}^nr_je_j$. The equality
$$
(s+x-2 \ Re [s])(x-r)=x^2-2xRe[s]+|s|^2
$$
gives
$$
sx-sr-xr +2 r\ Re [s] -|s|^2=0.
$$
Solving for $r$, we get
$$
r=(s+x -2 \ Re [s])^{-1}(sx-|s|^2),
$$
which depends on $x$. Let us now prove that the limit does not
exists. Let $\varepsilon =\varepsilon_0+\sum_{j=1}^n\varepsilon_je_j$,
  and consider
$$
S^{-1}(s,\overline{s}+\varepsilon)=
((\overline{s}+\varepsilon)^2-2(\overline{s}+\varepsilon)Re[s]+|s|^2)^{-1}\varepsilon
=((\overline{s}+\varepsilon)^2-2(\overline{s}+\varepsilon)Re[s]+|s|^2)^{-1}\varepsilon
$$
$$
=(\overline{s}\varepsilon+\varepsilon\overline{s} +\varepsilon^2
-2\varepsilon Re[s])^{-1}\varepsilon
$$
$$
=(\varepsilon^{-1}(\overline{s}\varepsilon+\varepsilon\overline{s}
+\varepsilon^2 -2\varepsilon Re[s]))^{-1}
=(\varepsilon^{-1}\overline{s}\varepsilon+\overline{s}
+\varepsilon -2 Re[s]))^{-1}.
$$
If we now let $\varepsilon\to 0$, we obtain that the term
$\varepsilon^{-1}\overline{s}\varepsilon$ does not have a limit
because
$$
\varepsilon^{-1}\overline{s}\varepsilon=
\frac{\overline\varepsilon}{|\varepsilon|^2}\overline{s}\varepsilon
$$
contains scalar addends of type
$\displaystyle\frac{\varepsilon_i\varepsilon_j
s_\ell}{|\varepsilon|^2}$ with $i,j,\ell\in \{0,1,2,3\}$ that do
not have limit.
\end{proof}
\noindent
The following result will be useful in the sequel and
its proof follows by a simple computation.
\begin{proposition}\label{equaurea}
Let $s=Re[s]+\uns$.
Then the following identity holds
$$
s^2-2sRe[s]+|s|^2=0.
$$
\end{proposition}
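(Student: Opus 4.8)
\textbf{Proof proposal for Proposition \ref{equaurea}.}
The plan is to reduce everything to the paravector decomposition of $s$ together with the single algebraic identity $\uns^2=-|\uns|^2$, and then expand and cancel. First I would write $s=s_0+\uns$ with $s_0=Re[s]\in\rr$ and $\uns=\sum_{j=1}^n s_je_j$ a $1$-vector, and record the two elementary facts that will drive the computation: (i) $s_0$ is real, hence central in $\rr_n$, so it commutes with $\uns$ and with everything in sight; and (ii) using the defining relations $e_ie_j+e_je_i=-2\delta_{ij}$,
$$
\uns^2=\sum_{i,j=1}^n s_is_j\,e_ie_j=-\sum_{j=1}^n s_j^2=-|\uns|^2 .
$$
From (ii) and $|s|^2=s_0^2+|\uns|^2$ one gets the closed form $|s|^2=s_0^2-\uns^2$.

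Next I would expand the left-hand side term by term. Since $s_0$ commutes with $\uns$,
$$
s^2=(s_0+\uns)^2=s_0^2+2s_0\uns+\uns^2,\qquad
2sRe[s]=2s_0(s_0+\uns)=2s_0^2+2s_0\uns .
$$
Substituting these and the expression $|s|^2=s_0^2-\uns^2$ into $s^2-2sRe[s]+|s|^2$, the two copies of $2s_0\uns$ cancel, the scalar terms give $s_0^2-2s_0^2+s_0^2=0$, and the terms $\uns^2$ and $-\uns^2$ cancel, yielding $0$.

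There is essentially no obstacle here: the statement is a direct consequence of the Clifford relations and the centrality of the real part, and the whole argument is a one-line computation once the decomposition $s=s_0+\uns$ and the identity $\uns^2=-|\uns|^2$ are in place. The only point deserving a word of care is that $\uns$ does \emph{not} commute with a general element of $\rr_n$, so one should not be tempted to simplify by moving factors past $\uns$; fortunately every product that occurs in the expansion involves only $s_0$ (which is central) and powers of $\uns$, so no such issue actually arises. I would therefore present the proof simply as the displayed chain of equalities above, remarking that it is the computation announced in the statement.
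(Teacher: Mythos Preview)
Your proof is correct and is exactly the ``simple computation'' the paper alludes to (the paper does not spell out the argument, stating only that the proof follows by a simple computation). Your expansion using $s=s_0+\uns$, $\uns^2=-|\uns|^2$, and $|s|^2=s_0^2+|\uns|^2$ is the natural way to verify the identity, and the care you take regarding commutativity is appropriate though, as you note, unnecessary here since only the central scalar $s_0$ and powers of $\uns$ appear.
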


\section{Slice-monogenic functional calculus for bounded operators}

In the sequel, we will consider a Banach space $V$ over
$\mathbb{R}$ (the case of complex Banach spaces can be discussed in a  similar fashion)
 with norm $\|\cdot \|$.
It is possible to endow $V$
with an operation of multiplication by elements of $\rr_n$ which gives
a two-sided module over $\rr_n$.
We recall that
a two-sided module $V$ over $\rr_n$ is called a Banach module over $\rr_n$,
 if there exists a constant $C \geq 1$  such
that $\|va|\leq C\| v\| |a|$ and $\|av|\leq C |a|\| v\|$ for all
$v\in V$ and $a\in\rr_n$.

In the sequel, we will make use of the following notations.
\begin{itemize}
\item By $V$ we denote a Banach space  over
$\mathbb{R}$  with norm $\|\cdot \|$.
\item By $V_n$ we denote the
 two-sided Banach module  over $\rr_n$ corresponding to $V\otimes \rr_n$.
 An element in $V_n$ is of the type $\sum_A v_A\otimes e_A$ (where
 $A=i_1\ldots i_r$, $i_\ell\in \{1,2,\ldots, n\}$, $i_1<\ldots <i_r$ is a multi-index).
The multiplications of an element $v\in V_n$ with a scalar
$a\in \rr_n$ are defined as $va=\sum_A v_A \otimes (e_A a)$ and $av=\sum_A v_A \otimes (ae_A )$.
We will write
$\sum_A v_A e_A$ instead of $\sum_A v_A \otimes e_A$. We define $\| v\|^2_{V_n}=
\sum_A\| v_A\|^2_V$.
\item
$\mathcal{B}(V)$  is the space
of bounded $\mathbb{R}$-homomorphisms of the Banach space $V$ to itself
 endowed with the natural norm denoted by $\|\cdot\|_{\mathcal{B}(V)}$.


\item Let $T_A\in \mathcal{B}(V)$. We define an operator $T=\sum_A T_Ae_A$ and
its action on $v=\sum v_Be_B\in V_n$ as $T(v)=\sum_{A,B}
T_A(v_B)e_Ae_B$. The operator $T$ is a right-module homomorphism which is a bounded linear
map on $V_n$.
The set of all such bounded operators is denoted by $\mathcal{B}_n(V_n)$.
We define $\|T\|^2_{\mathcal{B}_n(V_n)}=\sum_A \|T_A\|^2_{\mathcal{B}(V)}$.
\end{itemize}

\subsection{The $S$-resolvent operator for bounded operators }
Throughout the rest of this section, and unless otherwise specified,
we will only consider operators of the form
$T=T_0+\sum_{j=1}^ne_jT_j$ where $T_\mu\in\mathcal{B}(V)$ for $\mu=0,1,...,n$.
The set of such operators in ${\mathcal{B}_n(V_n)}$ will be denoted by $\mathcal{B}^{\small 0,1}_n(V_n)$.

\begin{definition}
Let $T\in\mathcal{B}^{\small 0,1}_n(V_n)$ and $s=Re[s]+\uns$. We define the
$S$-resolvent operator series as
\begin{equation}\label{Sresolv}
S^{-1}(s,T):=\sum_{n\geq 0} T^n s^{-1-n}
\end{equation}
for $\|T\|< |s|$.
\end{definition}

\begin{theorem}\label{thsmeno}
Let $T\in\mathcal{B}^{\small 0,1}_n(V_n)$ and $s=Re[s]+\uns$.  Then
\begin{equation}\label{ciao}
\sum_{n\geq 0} T^n s^{-1-n}=-(T^2-2T
Re[s]+|s|^2\mathcal{I})^{-1}(T-\overline{s}\mathcal{I}),
\end{equation}
for $\|T\|< |s|$.
\end{theorem}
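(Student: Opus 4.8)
The plan is to mimic the proof of Theorem~\ref{inversoalgebrico} (the scalar, or rather algebraic, identity $\sum_{n\geq 0}x^n s^{-1-n}=-(x^2-2xRe[s]+|s|^2)^{-1}(x-\overline{s})$), transferring it verbatim to the operator setting. The only feature of $x$ that the scalar argument really uses is that $x$ belongs to $\rr_n^0\oplus\rr_n^1$, so that $x$ and $\overline{x}$ commute and $x+\overline{x}=2Re[x]$, $x\overline{x}=|x|^2$ are real scalars; the crucial point here is that our operators $T$ are required to lie in $\mathcal{B}^{\small 0,1}_n(V_n)$, i.e.\ $T=T_0+\sum_{j=1}^n e_jT_j$ with $T_\mu\in\mathcal{B}(V)$, so exactly the same algebraic relations hold for $T$ (with $\overline{T}:=T_0-\sum_j e_jT_j$, which commutes with $T$ since the $T_\mu$ commute with each other? — no: they need not commute). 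So the first thing I would check carefully is which manipulations from the scalar proof survive.

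First I would establish convergence: for $\|T\|<|s|$ the series $\sum_{n\geq 0}T^n s^{-1-n}$ converges absolutely in $\mathcal{B}_n(V_n)$, since $\|T^n s^{-1-n}\|\leq C^{k}\|T\|^n |s|^{-1-n}$ for a suitable module constant, and this is a convergent geometric-type series. Then I would compute, as in the scalar case, the product $(T^2-2T Re[s]+|s|^2\mathcal{I})\cdot\big(\sum_{n\geq 0}T^n s^{-1-n}\big)$ from the left, or equivalently verify the telescoping identity
\begin{equation}\label{telescope}
(T^2-2T\,Re[s]+|s|^2\mathcal{I})\sum_{n=0}^{N}T^n s^{-1-n}=-(T-\overline{s}\mathcal{I})+R_N,
\end{equation}
where $R_N\to 0$ as $N\to\infty$. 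Here one uses $s+\overline{s}=2Re[s]$, $s\overline{s}=|s|^2$ and $Re[s],|s|^2\in\rr$, so these scalars commute with every $T^n$ and can be moved freely; the cancellation is purely formal in the index $n$ and is identical to the scalar computation. Letting $N\to\infty$ and then applying $(T^2-2T Re[s]+|s|^2\mathcal{I})^{-1}$ on the left gives \eqref{ciao} — provided that operator is invertible.

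The main obstacle, and the step that genuinely needs care, is justifying that $T^2-2T\,Re[s]+|s|^2\mathcal{I}$ is invertible in $\mathcal{B}_n(V_n)$ whenever $\|T\|<|s|$. This follows by writing $T^2-2T\,Re[s]+|s|^2\mathcal{I}=|s|^2\big(\mathcal{I}-2Re[s]|s|^{-2}T+|s|^{-2}T^2\big)$ and observing that $\|2Re[s]|s|^{-2}T-|s|^{-2}T^2\|\leq 2|Re[s]|\,|s|^{-2}\|T\|+|s|^{-2}\|T\|^2$, which one must show is $<1$ when $\|T\|<|s|$; since $|Re[s]|\leq|s|$, this bound is $\leq 2\|T\|/|s|+\|T\|^2/|s|^2=(\|T\|/|s|)(2+\|T\|/|s|)$, and this is $<1$ precisely when $\|T\|/|s|<\sqrt{2}-1$, which is \emph{not} guaranteed by $\|T\|<|s|$ alone. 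So the clean Neumann-series argument does not quite close the gap, and I would instead argue that the operator $T^2-2T\,Re[s]+|s|^2\mathcal{I}$ is invertible because it equals, on the convergence region, $-(T-\overline{s}\mathcal{I})$ times the (bounded, convergent) sum of the series from the right as well — i.e.\ I would prove \eqref{telescope} in both orders and deduce that $\sum_{n\geq 0}T^n s^{-1-n}$ is a two-sided inverse of $-(T^2-2T Re[s]+|s|^2\mathcal{I})(T-\overline s\mathcal I)^{-1}$ after checking $T-\overline s\mathcal I$ is invertible for $\|T\|<|s|$ (immediate, since $\|\overline s\mathcal I\|=|s|>\|T\|$, so $\overline s\mathcal I-T$ is invertible by the standard Neumann series). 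From the two-sided telescoping identities one reads off that $(T^2-2T Re[s]+|s|^2\mathcal I)$ is invertible with inverse $-\big(\sum_{n\geq 0}T^n s^{-1-n}\big)(T-\overline s\mathcal I)^{-1}$, and rearranging yields \eqref{ciao}. The remaining subtlety is purely bookkeeping: the factors $T-\overline{s}\mathcal{I}$ and $(T^2-2T Re[s]+|s|^2\mathcal{I})^{-1}$ need not commute in general, so I would be careful to keep the stated left-to-right order $(T^2-2T Re[s]+|s|^2\mathcal{I})^{-1}(T-\overline{s}\mathcal{I})$ throughout and to verify it is consistent with the order in which the telescoping produces it.
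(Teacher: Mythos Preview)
Your core computation is exactly the paper's: multiply the series on the left by $Q(T):=T^2-2Re[s]\,T+|s|^2\mathcal{I}$, use that $Q(T)$ commutes with every $T^n$ (its coefficients are real scalars), and collapse the resulting coefficients via the identity $s^2-2Re[s]\,s+|s|^2=0$ of Proposition~\ref{equaurea}. The paper does not phrase this as a telescoping with remainder but simply expands the infinite product, collects the coefficient of each power $T^k$, and invokes Proposition~\ref{equaurea} to annihilate the $k\geq 2$ coefficients, obtaining $Q(T)S=\overline{s}\mathcal{I}-T$; the content is identical to what you describe.

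Where you go beyond the paper is in worrying about invertibility of $Q(T)$. The paper is in fact casual here: it verifies only the one-sided identity $Q(T)S=-(T-\overline{s}\mathcal{I})$ and reads off the conclusion, so your instinct is sound. But your proposed fix has two problems. First, the ``other order'' does not telescope: in $S\,Q(T)=\sum_n T^n s^{-1-n}Q(T)$ the factor $s^{-1-n}$ sits between $T^n$ and the $T$'s inside $Q(T)$, and since $s$ and $T$ do not commute you cannot recombine powers of $T$. (What \emph{does} work is to run the same computation with the companion series $S':=\sum_n s^{-1-n}T^n$: since $Q(T)$ commutes with $T^n$, one gets $S'Q(T)=-(T-\overline{s}\mathcal{I})$, and together with $Q(T)S=-(T-\overline{s}\mathcal{I})$ this furnishes both a left and a right inverse of $Q(T)$, once $T-\overline{s}\mathcal{I}$ is known to be invertible.) Second, your claim that $T-\overline{s}\mathcal{I}$ is invertible ``by the standard Neumann series'' is too quick: $\overline{s}$ is not central in $\mathcal{B}_n(V_n)$, so neither the factorisation $\overline{s}\mathcal{I}-T=\overline{s}(\mathcal{I}-\overline{s}^{-1}T)$ nor the bound $\|\overline{s}^{-1}T\|<1$ is automatic with the module norm (and module constant $C\geq1$) in play; this step needs an explicit argument.
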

\begin{proof}
In Theorem \ref{inversoalgebrico} the components
of $x$ and $s$ are real numbers and therefore they obviously commute.
When we formally replace $x$ by operator $T$ we cannot assume that $T_\mu T_\nu=T_\nu T_\mu$
and so we need to verify independently that (\ref{ciao}) still holds.
To this aim, we check that
$-(T-\overline{s}\mathcal{I})^{-1}(T^2-2TRe[s]+|s|^2\mathcal{I})$
is the inverse of $\sum_{n\geq 0} T^n s^{-1-n}$.
In what follows, we assume the convergence of the series to be in the
norm of $\mathcal{B}_n(V_n)$:
$$
-(T-\overline{s}\mathcal{I})^{-1}(T^2-2T
Re[s]+|s|^2\mathcal{I})\sum_{n\geq 0} T^n
s^{-1-n}=\mathcal{I}
$$
so then we get
$$
(-|s|^2\mathcal{I}-T^2+2TRe[s])\sum_{n\geq 0} T^n s^{-1-n}=T+(s-2
\ Re [s])\mathcal{I}.
$$
Observing that $-|s|^2\mathcal{I}-T^2+2TRe[s]$ commutes with $T^n$
we can write
$$
\sum_{n\geq 0} T^n(-|s|^2-T^2+2TRe[s]) s^{-1-n}=T+(s-2 \ Re
[s])\mathcal{I}.
$$
Now expand the series as
$$
\sum_{n\geq 0} T^n(-|s|^2\mathcal{I}-T^2+2Re[s] T) s^{-1-n}=
(-|s|^2\mathcal{I}-T^2+2T Re[s]  ) s^{-1}
$$
$$
+T^1(-|s|^2\mathcal{I}-T^2+2TRe[s]) s^{-2}
+T^2(-|s|^2\mathcal{I}-T^2+2TRe[s]) s^{-3} +...
$$
$$
=-\Big(|s|^2s^{-1}+T(-2s Re[s] +|s|^2)s^{-2} +T^2(s^2-2s
Re[s]+|s|^2)s^{-3} +...\Big)
$$
and using Proposition \ref{equaurea},
we get
$$
\sum_{n\geq 0} T^n(-|s|^2-T^2+2TRe[s]) s^{-1-n}
=-|s|^2s^{-1}\mathcal{I}+Ts^2s^{-2}=-|s|^2s^{-1}\mathcal{I}+T
$$
$$
=
-\overline{s}ss^{-1}\mathcal{I}+T=-\overline{s}\mathcal{I}+T=(s-2
\ Re [s])\mathcal{I}+T.
$$
\end{proof}

\begin{proposition}\label{44}
When $Ts\mathcal{I}=sT$, the operator $S^{-1}(s,T)$ equals $(s\mathcal{I}-T)^{-1}$
when the series (\ref{Sresolv}) converges.
\end{proposition}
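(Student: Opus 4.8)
The plan is to argue directly from the defining series (\ref{Sresolv}), using the hypothesis $Ts\mathcal{I}=sT$ only to move scalars past powers of $T$. That hypothesis says precisely that $T$ commutes with the scalar operator $s\mathcal{I}$; since $Re[s]\,\mathcal{I}$ is a real multiple of the identity and hence central, it follows at once that $T$ also commutes with $\overline{s}\,\mathcal{I}=(2Re[s]-s)\mathcal{I}$ and, by inversion, with $s^{-1}\mathcal{I}$. Consequently every power $T^{n}$ commutes with every integer power $s^{k}\mathcal{I}$, $k\in\mathbb{Z}$, and the series (\ref{Sresolv}) may be handled as an ordinary Neumann series.

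Concretely, I would form the partial sums $S_{N}:=\sum_{n=0}^{N}T^{n}s^{-1-n}$ and compute $(s\mathcal{I}-T)S_{N}$. Using $sT^{n}=T^{n}s$, the summands telescope:
$$
(s\mathcal{I}-T)S_{N}=\sum_{n=0}^{N}\bigl(T^{n}s^{-n}-T^{n+1}s^{-1-n}\bigr)=\mathcal{I}-T^{N+1}s^{-N-1}.
$$
Since the series (\ref{Sresolv}) is assumed to converge in $\mathcal{B}_{n}(V_{n})$, its general term $T^{n}s^{-1-n}$ tends to $0$; because $V_{n}$ is a Banach module over $\rr_{n}$, right multiplication by the fixed element $s$ is a bounded operation on $\mathcal{B}_{n}(V_{n})$, so $T^{N+1}s^{-N-1}=(T^{N+1}s^{-N-2})\,s\to 0$ as $N\to\infty$. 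Passing to the limit gives $(s\mathcal{I}-T)\,S^{-1}(s,T)=\mathcal{I}$. Running the same telescoping after multiplying $S_{N}$ on the right by $s\mathcal{I}-T$ (again legitimate by the commutation just established, since $s^{-1-n}\mathcal{I}$ commutes with $T$) yields $S^{-1}(s,T)\,(s\mathcal{I}-T)=\mathcal{I}$. Hence $s\mathcal{I}-T$ is invertible and $S^{-1}(s,T)=(s\mathcal{I}-T)^{-1}$, which is the assertion.

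The only point that needs care is the vanishing $T^{N+1}s^{-N-1}\to 0$: this term is not literally the general term of (\ref{Sresolv}), so one must invoke the compatibility of the norm with the module structure (the constant $C$ in the definition of Banach module recalled at the start of Section~3) to move a fixed Clifford scalar through the limit. Everything else is routine bookkeeping. As a consistency check, the identity also follows from Theorem~\ref{thsmeno}: when $s\mathcal{I}$ commutes with $T$ one has the factorization $T^{2}-2T\,Re[s]+|s|^{2}\mathcal{I}=(s\mathcal{I}-T)(\overline{s}\,\mathcal{I}-T)$ with mutually commuting factors, so that $-(T^{2}-2T\,Re[s]+|s|^{2}\mathcal{I})^{-1}(T-\overline{s}\,\mathcal{I})$ collapses to $(s\mathcal{I}-T)^{-1}$, in agreement with the direct computation above.
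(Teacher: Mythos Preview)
Your proof is correct and is precisely the ``direct computation'' the paper invokes without writing out: the paper's entire proof reads ``It follows by direct computation.'' Both your telescoping Neumann-series argument and your alternative factorization $T^{2}-2T\,Re[s]+|s|^{2}\mathcal{I}=(s\mathcal{I}-T)(\overline{s}\,\mathcal{I}-T)$ via Theorem~\ref{thsmeno} are valid realizations of that computation.
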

\begin{proof} It follows by direct computation.
\end{proof}

\begin{definition}(The $S$-spectrum and the $S$-resolvent set)
Let
$T\in\mathcal{B}^{\small 0,1}_n(V_n)$
 and $ s=Re[s]+\uns$.
We define the $S$-spectrum $\sigma_S(T)$ of $T$ as:
$$
\sigma_S(T)=\{ s\in \mathbb{R}^{n+1}\ \ :\ \ T^2-2 \ Re [s]T+|s|^2\mathcal{I}\ \ \
{\it is\ not\  invertible}\}.
$$
An element in $\sigma_S(T)$ will be called an $S$-eigenvalue.

\noindent
The $S$-resolvent set $\rho_S(T)$ is defined by
$$
\rho_S(T)=\rr^{n+1}\setminus\sigma_S(T).
$$
\end{definition}

\begin{definition}(The $S$-resolvent operator)
Let
$T\in\mathcal{B}^{\small 0,1}_n(V_n)$
 and $ s=Re[s]+\uns \in \rho_S(T)$.
We define the $S$-resolvent operator as
\begin{equation}\label{Sresolvoperator}
S^{-1}(s,T):=-(T^2-2Re[s] T+|s|^2\mathcal{I})^{-1}(T-\overline{s}\mathcal{I}).
\end{equation}
\end{definition}

\begin{example}
{\rm (Pauli matrices)} {\rm As an example, we compute the $S$-spectrum of two Pauli matrices $\sigma_3$, $\sigma_1$
(compare with example 4.10 in \cite{jefferies}):
$$
\sigma_3=\begin{bmatrix}
  1 & 0 \\
  0 & -1 \\
\end{bmatrix}
\quad\quad
\sigma_1=\begin{bmatrix}
  0 & 1 \\
  1 & 0 \\
\end{bmatrix}.
$$
Let us consider the matrix $T=\sigma_3 e_1+\sigma_1
e_2$ and let us compute $T^2-2 Re[s]T+|s|^2\mathcal{I}$.  We obtain the matrix
$$
\begin{bmatrix}
  |s|^2-2 -2 Re[s]e_1 & 2(e_1-Re[s])e_2 \\
-2(e_1+Re[s])e_2 &  |s|^2-2 +2 Re[s]e_1 \\
\end{bmatrix}
$$
whose $S$-spectrum is
$\sigma_S(T)=\{0\}\cup\{s\in\rr^3\ :\ Re[s]=0, |s|=2\}$.
}

\end{example}

\begin{theorem}
Let $T\in\mathcal{B}^{\small 0,1}_n(V_n)$  and $s=Re[s]+\uns\in \rho_S(T)$. Let $
S^{-1}(s,T) $ be the $S$-resolvent operator defined in
(\ref{Sresolvoperator}). Then
$S^{-1}(s,T)$ satisfies the ($S$-resolvent) equation
\begin{equation}\label{Sresolveqbound}
S^{-1}(s,T)s-TS^{-1}(s,T)=\mathcal{I}.
\end{equation}
\end{theorem}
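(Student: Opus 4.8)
The statement to prove is the $S$-resolvent equation $S^{-1}(s,T)s - T\,S^{-1}(s,T) = \mathcal{I}$, where $S^{-1}(s,T) = -(T^2 - 2\mathrm{Re}[s]\,T + |s|^2\mathcal{I})^{-1}(T - \overline{s}\mathcal{I})$ for $s \in \rho_S(T)$. The natural approach is to verify this by direct algebraic manipulation, exploiting the fact that the operator $Q_s(T) := T^2 - 2\mathrm{Re}[s]\,T + |s|^2\mathcal{I}$ is a polynomial in $T$ with real coefficients, hence commutes with $T$ and with its own inverse. Since $Q_s(T)^{-1}$ commutes with $T$, it is convenient to move it to the left and reduce the claim to a polynomial identity in $T$ and $s$.

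\medskip\noindent
\textbf{Key steps.} First I would write $S^{-1}(s,T) = -Q_s(T)^{-1}(T - \overline{s}\mathcal{I})$ and substitute into the left-hand side of \eqref{Sresolveqbound}. Multiplying \eqref{Sresolveqbound} on the left by $Q_s(T)$ (which is legitimate since $s \in \rho_S(T)$ makes $Q_s(T)$ invertible), and using that $Q_s(T)$ commutes with $T$ (both are real-coefficient polynomials in the single operator $T$), the equation becomes
$$
-(T - \overline{s}\mathcal{I})s + T(T - \overline{s}\mathcal{I}) = Q_s(T) = T^2 - 2\mathrm{Re}[s]\,T + |s|^2\mathcal{I}.
$$
Then I would expand the left-hand side: $-(T - \overline{s}\mathcal{I})s = -Ts + \overline{s}s\,\mathcal{I} = -Ts + |s|^2\mathcal{I}$, using $\overline{s}s = s\overline{s} = |s|^2$, and $T(T-\overline{s}\mathcal{I}) = T^2 - T\overline{s}$. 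Here one must be careful: $T$ has the form $T_0 + \sum_j e_j T_j$ and $s, \overline{s}$ are Clifford numbers, so $Ts$, $T\overline{s}$, $sT$ need not coincide — but they are all well-defined in $\mathcal{B}_n(V_n)$ with the convention that paravector coefficients multiply on the appropriate side. Adding the two pieces gives $T^2 - Ts - T\overline{s} + |s|^2\mathcal{I}$, and since $s + \overline{s} = 2\mathrm{Re}[s]$ is real (hence central), $-Ts - T\overline{s} = -T(s + \overline{s}) = -2\mathrm{Re}[s]\,T$. This matches $Q_s(T)$ exactly, completing the verification.

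\medskip\noindent
\textbf{Main obstacle.} The delicate point is the bookkeeping of left versus right multiplication by Clifford numbers when $T$ does not commute with the components of $s$. One must check that $Q_s(T)^{-1}$ genuinely commutes with $T$ — this holds because $Q_s(T)$ involves only real scalars $2\mathrm{Re}[s]$ and $|s|^2$ together with powers of $T$, so it lies in the commutative subalgebra generated by $T$ and $\mathcal{I}$ — and that the rearrangement $(T - \overline{s}\mathcal{I})s$ is interpreted consistently (the factor $s$ acts on the right, but $\overline{s}$ in $T - \overline{s}\mathcal{I}$ multiplies $\mathcal{I}$, so $\overline{s}\mathcal{I}\cdot s = \overline{s}s\,\mathcal{I} = |s|^2\mathcal{I}$ with the scalars commuting freely). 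Once the multiplication conventions are pinned down, the proof is the short computation sketched above; there is no analytic subtlety since $s$ lies in the resolvent set and everything is a bounded operator.
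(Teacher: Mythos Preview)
Your proposal is correct and follows essentially the same route as the paper: substitute the $S$-resolvent, multiply on the left by $Q_s(T)=T^2-2\,Re[s]\,T+|s|^2\mathcal{I}$, use that $Q_s(T)$ commutes with $T$ (since its coefficients are real), and reduce to the identity $-(T-\overline{s}\mathcal{I})s+T(T-\overline{s}\mathcal{I})=T^2-2\,Re[s]\,T+|s|^2\mathcal{I}$. Your final expansion via $s+\overline{s}=2\,Re[s]$ and $\overline{s}s=|s|^2$ is exactly the verification the paper leaves implicit.
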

\begin{proof}
Replacing (\ref{Sresolvoperator})
in the above equation we have
\begin{equation}\label{serve21}
-(T^2-2Re[s] T+|s|^2\mathcal{I})^{-1}(T-\overline{s}\mathcal{I})s
+T
(T^2-2Re[s] T+|s|^2\mathcal{I})^{-1}(T-\overline{s}\mathcal{I})=\mathcal{I}
\end{equation}
and applying $(T^2-2Re[s] T+|s|^2\mathcal{I})$ to both hands sides of (\ref{serve21}),
we get
$$
-(T-\overline{s}\mathcal{I})s+(T^2-2Re[s] T+|s|^2\mathcal{I})T
(T^2-2Re[s] T+|s|^2\mathcal{I})^{-1}(T-\overline{s}\mathcal{I})
$$
$$
=T^2-2Re[s] T+|s|^2\mathcal{I}.
$$
Since $T$ and $T^2-2Re[s] T+|s|^2\mathcal{I}$ commute, we obtain the identity
$$
-(T-\overline{s}\mathcal{I})s+T
(T-\overline{s}\mathcal{I})=T^2-2Re[s] T+|s|^2\mathcal{I}
$$
which proves the statement.

\end{proof}

\subsection{Properties of the spectrum and the functional calculus  }

\begin{theorem}\label{strutturaS} (Structure of the $S$-spectrum)
\par\noindent
Let $T\in\mathcal{B}^{\small 0,1}_n(V_n)$
and let $p=Re[p]+\unp$  be an $S$-eigenvalue of $T$
 with $ \unp\neq 0$.
Then all the elements of the sphere $s=Re[s]+\uns$ with
$s_0=p_0$ and $|\uns|=|\unp|$  are $S$-eigenvalues of $T$.
\end{theorem}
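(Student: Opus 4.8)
The plan is to exploit the fact that the operator appearing in the definition of the $S$-spectrum, namely $T^2-2\,Re[s]\,T+|s|^2\mathcal{I}$, depends on $s=Re[s]+\uns$ only through the two real quantities $Re[s]=s_0$ and $|s|^2=s_0^2+|\uns|^2$. Once this is noticed, the statement is almost immediate: there is no operator-theoretic difficulty, only the elementary identity $|s|^2=Re[s]^2+|\uns|^2$ combined with the definition of $\sigma_S(T)$.

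First I would record the observation precisely: if $s,s'\in\rr^{n+1}$ satisfy $Re[s]=Re[s']$ and $|s|=|s'|$, then $T^2-2\,Re[s]\,T+|s|^2\mathcal{I}=T^2-2\,Re[s']\,T+|s'|^2\mathcal{I}$ as elements of $\mathcal{B}_n(V_n)$, hence the two operators are invertible simultaneously. In particular, whether $s$ belongs to $\sigma_S(T)$ depends only on the pair $(Re[s],|s|)$.

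Now let $p=Re[p]+\unp$ be an $S$-eigenvalue with $\unp\neq 0$, and let $s=Re[s]+\uns$ be any element with $s_0=p_0$ and $|\uns|=|\unp|$. Then $Re[s]=s_0=p_0=Re[p]$ and
$$
|s|^2=s_0^2+|\uns|^2=p_0^2+|\unp|^2=|p|^2 .
$$
By the observation above, $T^2-2\,Re[s]\,T+|s|^2\mathcal{I}=T^2-2\,Re[p]\,T+|p|^2\mathcal{I}$, and the latter operator is not invertible since $p\in\sigma_S(T)$. Therefore $T^2-2\,Re[s]\,T+|s|^2\mathcal{I}$ is not invertible, i.e. $s\in\sigma_S(T)$, which is exactly the assertion. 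The hypothesis $\unp\neq 0$ plays no role in the argument beyond guaranteeing that the set $\{Re[s]+\uns\ :\ s_0=p_0,\ |\uns|=|\unp|\}$ is a genuine sphere rather than the single point $p_0$; accordingly, the only "trick" is to notice the reduced dependence of the defining operator on $s$, and I do not expect any real obstacle.
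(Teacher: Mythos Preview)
Your argument is correct and is exactly the immediate observation the paper has in mind (the paper itself leaves the proof to the reader): the defining operator $T^2-2\,Re[s]\,T+|s|^2\mathcal{I}$ depends on $s$ only through $Re[s]$ and $|s|$, so any $s$ with $s_0=p_0$ and $|\uns|=|\unp|$ yields the same non-invertible operator. There is nothing to add.
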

\begin{proof}
It is immediate and is left to the reader.
\end{proof}
\begin{definition}
Let $S^n=\{x \in \mathbb{R}^{n+1} : |x|=1\}$ denote the unit sphere of $\mathbb{R}^{n+1}$. For any set $A\subseteq \mathbb{R}^{n+1}$, let us define the {\it circularization of} $A$ as  set 
\begin{displaymath}
circ(A):=\bigcup_{u+vI \in A} u+vS^n.
\end{displaymath}
\end{definition}
\begin{definition}\label{def3.9}
Let $T=T_0+\sum_{j=1}^ne_jT_j\in\mathcal{B}^{\small 0,1}_n(V_n)$.
Let $U \subset \rr^{n+1}$ be an open set such that
\begin{itemize}
\item[(i)] $\pp (U\cap L_I)$ is union of a finite number of
rectifiable Jordan curves  for every $I\in\mathbb{S}$,
\item[(ii) ] $U$ contains the circularization of the $S$-spectrum $\sigma_S(T)$.
\end{itemize}
A function  $f$
is said to be locally s-monogenic on  $\sigma_S(T)$ if there exists
an open set  $U \subset \rr^{n+1}$, as above,
on which $f$ is s-monogenic.
\par\noindent
We will denote
by $\mathcal{M}_{\sigma_S(T)}$ the set of locally s-monogenic functions
on $\sigma_S (T)$.
\end{definition}
\begin{remark}{\rm Note that any open set $U$ containing the circularization of the $S$-spectrum contains open balls with center in $x_0$ for
all $x_0\in\sigma_S(T)\cap \rr$. Moreover, by Theorem \ref{strutturaS},  if the $(n-1)$-sphere $\sigma=\{s\in\rr^{n+1}\ : Re[s]=s_0, |\underline{s}|=r\}$ belongs to $\sigma_S(T)$, then $U$ must contain an open annular domain with center in $s_0\in\rr$. In fact, set $m=\min_{s\in circ(\sigma )}{\rm dist}(s,\pp U)$. Then for any  $R<m$  the annular domain $\{x\in\rr^{n+1}\ |\ r-R<|x -s_0|<r+R\}$ is contained in $U$.}
\end{remark}
\begin{theorem}\label{indipdaui} Let $T\in\mathcal{B}^{\small 0,1}_n(V_n)$
 and  $f\in \mathcal{M}_{\sigma_S(T)}$.
  Let $U\subset \mathbb{R}^{n+1}$ be an open set  as in Definition \ref{def3.9}  and let $U_I=U\cap L_I$ for $I\in \mathbb{S}$.  Then the integral
\begin{equation}\label{integ311}
{{1}\over{2\pi }} \int_{\partial U_I } S^{-1} (s,T)\  ds_I \ f(s)
\end{equation}
does not depend on the choice of the imaginary unit $I$ and on the
open set $U$.
\end{theorem}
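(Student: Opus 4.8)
The plan is to prove independence in two stages, mirroring the classical Riesz--Dunford argument: first fix the slice $L_I$ and show independence of the domain $U$, then fix a domain and show independence of the imaginary unit $I$. For the first stage, suppose $U$ and $U'$ are two admissible open sets (both satisfying (i) and (ii)) and work inside a fixed slice $L_I$. The integrand $s\mapsto S^{-1}(s,T)\,ds_I\,f(s)$ is, for fixed $I$ and $T$, a function of the complex variable $s=u+Iv\in L_I$ which is holomorphic (in the splitting sense, i.e. killed by $\overline{\partial}_I$) wherever it is defined --- that is, on $\rho_S(T)\cap L_I$ intersected with the domain of s-monogenicity of $f$. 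Both $\partial U_I$ and $\partial U_I'$ are finite unions of rectifiable Jordan curves that enclose $\sigma_S(T)\cap L_I$ and lie inside the s-monogenicity domain of $f$; hence by the Cauchy integral theorem on the complex line $L_I$ (applied componentwise in the Clifford basis, since $S^{-1}(s,T)\,ds_I\,f(s)$ takes values in a finite-dimensional algebra and integration is linear) the two integrals agree. One must be slightly careful with orientations and with the fact that $U_I$ may be an annular-type region rather than a disc, as noted in the Remark preceding the statement, but this is exactly the setting in which the classical theorem applies: the homology class of $\partial U_I$ in $(\text{dom} f \cap \rho_S(T)\cap L_I)\setminus(\sigma_S(T)\cap L_I)$ is determined by the requirement that it wind once around $\sigma_S(T)\cap L_I$, and any two admissible $U$ give homologous cycles.

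For the second stage, fix an admissible $U$ and let $I,J\in\mathbb{S}$ be two imaginary units; we want
\begin{equation}\label{proofprop:goal}
\frac{1}{2\pi}\int_{\partial U_I}S^{-1}(s,T)\,ds_I\,f(s)=\frac{1}{2\pi}\int_{\partial U_J}S^{-1}(s,T)\,ds_J\,f(s).
\end{equation}
The key point is that both the s-resolvent operator $S^{-1}(s,T)=-(T^2-2\,Re[s]\,T+|s|^2\mathcal{I})^{-1}(T-\overline{s}\mathcal{I})$ and the hypothesis ``$f\in\mathcal{M}_{\sigma_S(T)}$'' depend on $s$ only through $Re[s]$ and $|\underline s|$ (for $S^{-1}$ this is manifest from the formula; for $U$ it is built into condition (ii), which asks $U$ to contain the full circularization). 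So the natural strategy is a change of variables: parametrize $\partial U_I$ by $s=u+Iv$ and $\partial U_J$ by $\sigma=u+Jv$ over the same curve $(u,v)$ in the real $(Re,|\underline{\cdot}|)$-parameter plane --- legitimate because, by condition (ii) and the Remark, $U\cap L_I$ and $U\cap L_J$ are the ``same'' region described in $(u,v)$-coordinates. Under this correspondence $S^{-1}(u+Iv,T)$ and $S^{-1}(u+Jv,T)$ are related in a controlled way, and $ds_I=-\,ds\,I$, $ds_J=-\,d\sigma\,J$. The honest way to run this is to split the Clifford-valued integrand into its ``even part'' (coefficients commuting with all imaginary units, giving the $du$-term) and its ``$I$-odd part'' (the $dv$-term carrying the factor $I$); the even parts of the two integrals coincide term by term, and the odd parts coincide because the explicit $I$ produced by $ds_I$ is cancelled against the $I$ sitting inside $S^{-1}(u+Iv,T)\,f(u+Iv)$ in such a way that the result no longer depends on which unit was chosen. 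Invoking Theorem \ref{inversoalgebrico} (equivalently the fact that $S^{-1}$ is the sum of the series $\sum_n x^n s^{-1-n}$ wherever that converges, and is the maximal extension elsewhere) lets one reduce, by a density/analytic-continuation argument on the region $\|T\|<|s|$, to checking the cancellation on monomials $x^n s^{-1-n}$, where it is a direct computation.

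The main obstacle I expect is this second stage --- specifically, making the cancellation of the imaginary-unit dependence in \eqref{proofprop:goal} rigorous rather than formal. One has to verify that the pairing of $f$ with $S^{-1}(s,T)$ and the line element $ds_I$ really does conspire to produce a quantity invariant under $I\mapsto J$; the cleanest route is probably to first establish the identity on the resolvent series (where everything is an absolutely convergent sum of monomials and one can compute residue-style, using that $\frac{1}{2\pi}\int_{\partial\Delta_x(0,r)}s^{-1-n}\,ds_I$ picks out the coefficient of $s^{-1}$ independently of $I$, exactly as in Theorem 2.12), and then extend to all admissible $f$ and $U$ by the first-stage domain-independence together with analyticity. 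A secondary technical nuisance is that $\partial U_I$ need not be connected and its components need not be positively oriented discs; one handles this by the remark that $U$ always contains suitable balls (around real points of $\sigma_S(T)$) and annuli (around spheres in $\sigma_S(T)$), so $\partial U_I$ decomposes into standard pieces on each of which the residue computation is the classical one.
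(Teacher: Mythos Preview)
Your Stage~1 argument (independence of $U$ for fixed $I$) is exactly what the paper does: the Cauchy theorem on the plane $L_I$.

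For Stage~2 your overall strategy --- parametrize $\partial U_I$ and $\partial U_J$ by the same $(u,v)$-curve and reduce to series terms whose residues are visibly $I$-independent --- is also the paper's strategy, but the order in which you deploy the ingredients creates a genuine gap. Your primary route is to expand $S^{-1}(s,T)=\sum_{n\ge 0}T^n s^{-1-n}$ on the region $|s|>\|T\|$, check the monomial identity there, and then ``extend by analytic continuation.'' The problem is that $f$ need only be s-monogenic on a thin open neighborhood of the circularized spectrum; such a neighborhood need not meet the region $|s|>\|T\|$ at all, so there is nothing to continue from. (Also, a small slip: $S^{-1}(s,T)$ does \emph{not} depend on $s$ only through $Re[s]$ and $|\underline s|$ --- the factor $T-\overline{s}\,\mathcal I$ carries the imaginary unit explicitly. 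You later back off to ``related in a controlled way,'' which is fine, but the initial claim is false.)

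The paper fixes exactly this by reversing the roles: rather than expanding $S^{-1}$, it first uses Stage~1 to replace $\partial U_I$ by the boundaries of finitely many balls (around the real spectral points) and annuli (around the spectral spheres), and on each such piece expands $f$ --- not $S^{-1}$ --- in a Taylor or Laurent series with center on the real axis. One then integrates term by term, and each term is of the form $\int S^{-1}(s,T)\,ds_I\,(s-x_0)^m a_m$ over a circle or pair of circles in $L_I$ centered at a real $x_0$; these integrals are computed exactly as in Theorem~\ref{polyaa} and the answer is manifestly independent of $I$. You do mention this balls/annuli decomposition, but as a ``secondary technical nuisance''; in fact it is the main mechanism, and the expansion you need is of $f$, not of the resolvent.
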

\begin{proof}
We first note that the integral (\ref{integ311}) does
not depend on the choice of $U$ by the Cauchy theorem applied on the plane $L_I$, see \cite{slicecss}. We now show the independence of the choice of $I\in\mathbb{S}$. Note that since the $S$-spectrum is bounded (because it is contained in the ball $\{s\in\rr^{n+1}\, :\, |s|\leq \|T\|$\}) we can choose a finite number of open balls $B_1,\ldots, B_\nu$ and of open annular domains $A_1,\ldots, A_\mu$, $\nu, \mu \in \mathbb{N}$, containing the $S$-spectrum of $T$. We observe that thanks to the the Cauchy theorem we can write:
\begin{equation}\label{independence}
{{1}\over{2\pi }}\int_{\pp U_I}S^{-1}(s,T)ds_I f(s)
$$
$$
={{1}\over{2\pi }}\sum_{i=1}^{\nu}\int_{\pp (B_i\cap L_I)} S^{-1}(s,T)ds_I f(s) +{{1}\over{2\pi }}\sum_{i=1}^{\mu}\int_{\pp (A_i\cap L_I)}S^{-1}(s,T)ds_I f(s),
\end{equation}
where the right hand side does not depend on the choice of the $B_i$'s and $A_i$'s.
Since $f$ admits series expansion on the $B_i$'s for Taylor theorem and on $A_i$'s by the Laurent theorem (see \cite{slicecss}), we can integrate term by term.
Let us now choose another
imaginary unit $I'\in\mathbb{S}$, $I\not=I'$ and let us write the analogue of (\ref{independence}) on $L_{I'}$.
The $S$-spectrum contains either real
points or, by  Theorem \ref{strutturaS}, $(n-1)$-spheres of the type $\{s\in\rr^{n+1}\ : Re[s]=s_0, |\underline{s}|=r\}$. Every
complex line $L_I=\mathbb{R}+I\mathbb{R}$ contains all the real points
belonging to the $S$-spectrum.  Let $\{s\in\rr^{n+1}\ : s=s_0+It, |t|=r\}$
be an $(n-1)$-sphere in the $S$-spectrum.
The two points of the sphere lying on the complex line $L_I$ are $s_0\pm rI$
so, on the plane, they have
coordinates $(s_0,\pm r)$. The coordinates of the two intersection points
on a different
complex line $L_{I'}$  are still
$(s_0,\pm r)$, so the right hand side of (\ref{independence}) does not depend on the choice of $I\in\mathbb{S}$.
Thus
$$
{{1}\over{2\pi }}\int_{\partial U_I} S^{-1}(s,T)\
ds_I f(s)= {{1}\over{2\pi }}\int_{\partial U_{I'}} S^{-1}(s,T)\  ds_{I'} f(s).
$$

\end{proof}

We give a result that motivates the functional
calculus.
\begin{theorem}\label{polyaa}
Let $x=Re[x]+\unx$, $a=Re[a]+\una \in \mathbb{R}^{n+1}$, $m\in \mathbb{N}$   and
consider the monomial  $ x^m a$.
Consider $T\in\mathcal{B}^{\small 0,1}_n(V_n)$,
let  $U\subset \mathbb{R}^{n+1}$ be an open set  as in Definition \ref{def3.9},
and set $U_I=U\cap L_I$ for $I\in\mathbb{S}$.
Then
\begin{equation}\label{TA}
T^m a= {{1}\over{2\pi }} \int_{\partial U_I } S^{-1} (s,T)\  ds_I \
s^m \ a.
\end{equation}
\end{theorem}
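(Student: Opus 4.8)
The plan is to reduce the statement to a computation with the $S$-resolvent operator series and a term-by-term integration. Since the right-hand side of \eqref{TA} is independent of the choice of $I\in\mathbb{S}$ and of the admissible open set $U$ by Theorem \ref{indipdaui}, I am free to pick a convenient $U$. The convenient choice is a single open ball $B=B(0,R)$ centered at the origin with $R>\|T\|$; this $R$ indeed makes $B$ admissible, since $\sigma_S(T)$ is contained in $\{s\in\rr^{n+1}\ :\ |s|\leq\|T\|\}$, so its circularization is contained in $B$, and $\pp(B\cap L_I)$ is a single circle, hence a rectifiable Jordan curve. On $\pp U_I=\pp(B\cap L_I)$ we have $|s|=R>\|T\|$, so by the Definition \eqref{Sresolv} of the $S$-resolvent operator series together with Theorem \ref{thsmeno}, the operator $S^{-1}(s,T)$ appearing in the integrand coincides with the uniformly convergent series $\sum_{k\geq 0}T^k s^{-1-k}$ on that circle.

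The key steps, in order, are: (1) substitute $S^{-1}(s,T)=\sum_{k\geq 0}T^k s^{-1-k}$ into \eqref{TA}, using uniform convergence on the compact circle $\pp U_I$ to justify exchanging sum and integral, obtaining
$$
{{1}\over{2\pi }} \int_{\partial U_I } S^{-1} (s,T)\  ds_I \ s^m \ a
=\sum_{k\geq 0} T^k\,\Big({{1}\over{2\pi }}\int_{\partial U_I} s^{-1-k}\,ds_I\,s^m\Big)\,a .
$$
(2) Evaluate the scalar integrals $\frac{1}{2\pi}\int_{\partial U_I} s^{-1-k}\,ds_I\,s^m$ over the circle of radius $R$ in the complex plane $L_I$: parametrizing $s=R e^{I\theta}$, $ds_I=-ds\,I$, this is the classical Cauchy-type integral $\frac{1}{2\pi}\int_{\partial U_I} s^{m-1-k}\,ds_I$, which equals $1$ when $k=m$ and $0$ otherwise, because powers $s^j$ with $j\geq 0$ are s-monogenic on all of $L_I$ (and on $\rr^{n+1}$) and $\int_{\partial U_I} s^{-1}\,ds_I=2\pi$ while $\int_{\partial U_I}s^{j}\,ds_I=0$ for $j\neq -1$. (3) Conclude that only the $k=m$ term survives, giving exactly $T^m a$.

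The main obstacle is Step (2): I must be careful that the relevant ``Cauchy-type'' integral formula holds in this Clifford-algebra setting, i.e.\ that $\frac{1}{2\pi}\int_{\partial U_I} s^{j-1}\,ds_I=\delta_{j,0}$ for $j\in\mathbb{Z}$ with $j\geq -m$, where the integral is taken as a line integral in $L_I$ in the sense recalled in the Remark after Theorem 2.11. Since $s$ ranges over the complex line $L_I\cong\cc$, $s^{j}$ restricted to $L_I$ is literally the holomorphic function $(u+Iv)^j$, so this reduces to the elementary one-variable residue computation on a circle; the only point to check is that $I^2=-1$ makes the substitution $s=Re^{I\theta}$ legitimate and that $ds_I=-ds\,I$ produces the correct orientation and normalization so that the $j=0$ case gives $2\pi$ rather than $2\pi I$. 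Everything else—admissibility of the ball, uniform convergence, the interchange of sum and integral, and independence of $I$ and $U$—is either immediate or already supplied by Theorems \ref{thsmeno} and \ref{indipdaui}.
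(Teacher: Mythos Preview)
Your proposal is correct and follows essentially the same approach as the paper: expand $S^{-1}(s,T)$ as the series $\sum_{k\ge 0}T^k s^{-1-k}$ on a large circle $|s|=R>\|T\|$, integrate term by term, and use $\frac{1}{2\pi}\int s^{m-k-1}\,ds_I=\delta_{k,m}$ to pick out $T^m a$. The only cosmetic difference is that the paper first computes over the circle $C_r$ and then invokes the Cauchy theorem to pass to $\partial U_I$, whereas you invoke Theorem~\ref{indipdaui} up front to replace $U$ by a ball; these are equivalent.
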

\begin{proof}
Let us consider the power series expansion for the operator
$S^{-1} (s,T)$ and a circle $C_r$ centered in the origin and of radius $r> \|T\|$.
We have:
\begin{equation}
{{1}\over{2\pi }}\int_{\pp U_I} S^{-1} (s,T)\  ds_I \  s^m \ a
={{1}\over{2\pi }}\sum_{n\geq 0} T^n\int_{C_r} s^{-1-n+m}\
ds_I a
 = T^m \ a,
\end{equation}
since
\begin{equation}
\int_{C_r} ds_I  s^{-n-1+m}=0\ \ if \ \ n\not=m, \ \ \ \ \ \
\int_{C_r} ds_I  s^{-n-1+m}=2\pi \ \ if \ \ n=m.
\end{equation}
The Cauchy theorem shows that the above integrals are not affected if
we replace $C_r$ by $\pp U_I$.
\end{proof}
\begin{theorem}\label{compattezaS}(Compactness of $S$-spectrum)
\par\noindent
 Let $T\in\mathcal{B}^{\small 0,1}_n(V_n)$. Then
the $S$-spectrum $\sigma_S (T)$  is a compact nonempty set.
Moreover $\sigma_S (T)$ is
contained in $\{s\in\rr^{n+1}\, :\,  |s|\leq \|T\| \ \}$.
\end{theorem}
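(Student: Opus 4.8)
The statement has three parts --- the inclusion $\sigma_S(T)\subseteq\{s\in\rr^{n+1}:|s|\le\|T\|\}$, the closedness of $\sigma_S(T)$, and its nonemptiness --- and I would prove them in that order; the first two together give compactness.

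\emph{The inclusion.} Fix $s$ with $|s|>\|T\|$ and set $Q_s(T):=T^2-2Re[s]\,T+|s|^2\mathcal{I}$, a polynomial in $T$ with \emph{real} coefficients. I claim $Q_s(T)$ is invertible, so $s\in\rho_S(T)$. (This is already implicit in Theorem \ref{thsmeno}, whose right-hand side displays $Q_s(T)^{-1}$ for $\|T\|<|s|$; but here is a self-contained argument.) The scalar function $\lambda\mapsto(\lambda^2-2Re[s]\,\lambda+|s|^2)^{-1}$ is holomorphic near $0$, its only singularities being the roots $Re[s]\pm i|\underline{s}|$ of the denominator, both of modulus $|s|$; hence its Taylor series $\sum_{k\ge0}c_k(s)\lambda^k$ at $0$ has real coefficients and radius of convergence exactly $|s|$, so that $\limsup_k|c_k(s)|^{1/k}=|s|^{-1}$. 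Since $\|c_k(s)T^k\|\le|c_k(s)|\,\|T\|^k$, the root test shows $P_s(T):=\sum_{k\ge0}c_k(s)T^k$ converges absolutely in $\mathcal{B}_n(V_n)$ because $\|T\|<|s|$. From $(\lambda^2-2Re[s]\,\lambda+|s|^2)\sum_kc_k(s)\lambda^k=1$ one reads off Cauchy-product relations among the $c_k(s)$ which, applied term by term to $T$, give $Q_s(T)P_s(T)=P_s(T)Q_s(T)=\mathcal{I}$ --- the rearrangements being justified by absolute convergence, and all operators commuting as limits of polynomials in $T$. Hence $Q_s(T)$ is invertible and $s\notin\sigma_S(T)$. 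Thus $\sigma_S(T)$ is bounded, contained in the closed ball of radius $\|T\|$.

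\emph{Closedness and compactness.} The map $s\mapsto Q_s(T)$ is continuous from $\rr^{n+1}$ to $\mathcal{B}_n(V_n)$ (it is polynomial in the real scalars $Re[s]$ and $|s|^2$), and the invertible elements of $\mathcal{B}_n(V_n)$ form an open set; hence $\rho_S(T)$ is open, $\sigma_S(T)$ is closed, and being also bounded it is compact.

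\emph{Nonemptiness.} I would argue by contradiction, in the spirit of the classical proof that the spectrum of a bounded operator is nonempty. Suppose $\sigma_S(T)=\emptyset$. Then $S^{-1}(s,T)=-(Q_s(T))^{-1}(T-\overline{s}\mathcal{I})$ is defined for every $s\in\rr^{n+1}$, and --- as already used in the proof of Theorem \ref{polyaa} --- it is right s-monogenic in $s$ on $\rho_S(T)=\rr^{n+1}$: for $|s|>\|T\|$ it equals $\sum_{n\ge0}T^ns^{-1-n}$, a locally uniformly convergent series of right s-monogenic functions (each summand is right s-monogenic because $s\mapsto s^{-1-n}$ is and left multiplication by the constant operator $T^n$ preserves right s-monogenicity), whose s-monogenic continuation is (\ref{Sresolvoperator}). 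Fix $I\in\mathbb{S}$ and, for $r>0$, let $C_r\subset L_I$ be the circle of radius $r$ about $0$. Integrating the series term by term as in the proof of Theorem \ref{polyaa}, for $r>\|T\|$ one gets
$$
\frac{1}{2\pi}\int_{C_r}S^{-1}(s,T)\,ds_I=\sum_{n\ge0}T^n\Bigl(\frac{1}{2\pi}\int_{C_r}s^{-1-n}\,ds_I\Bigr)=T^0=\mathcal{I}.
$$
On the other hand, since $S^{-1}(\cdot,T)$ is right s-monogenic on the whole plane $L_I$, the Cauchy theorem on $L_I$ makes $\int_{C_r}S^{-1}(s,T)\,ds_I$ independent of $r>0$; and as $r\to0^+$ it tends to $0$, because $S^{-1}(\cdot,T)$ is bounded near $0$ (where $Q_0(T)=T^2$ is invertible) and $\|\int_{C_r}S^{-1}(s,T)\,ds_I\|\le 2\pi r\sup_{|s|=r}\|S^{-1}(s,T)\|\to0$. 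Therefore $\mathcal{I}=0$ in $\mathcal{B}_n(V_n)$, impossible for $V\ne\{0\}$. Hence $\sigma_S(T)\ne\emptyset$.

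\emph{Principal obstacle.} The only delicate step is the inclusion. The crude Neumann estimate $\|2Re[s]\,T-T^2\|\le 2\|T\|\,|s|+\|T\|^2$ makes $Q_s(T)$ invertible merely for $|s|>(1+\sqrt{2})\|T\|$, the coarser radius of the monogenic calculus; obtaining the sharp radius $\|T\|$ forces one to work with the real operator $Q_s(T)$ directly through the power series above (complexifying $V_n$ would in general inflate the operator norm and destroy the sharp constant). The closedness and nonemptiness arguments are routine.
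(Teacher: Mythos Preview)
Your proof is correct, and the closedness and nonemptiness arguments are essentially the paper's own: the paper invokes Theorem~\ref{polyaa} with $m=0$ to get $\frac{1}{2\pi}\int_{\partial U_I}S^{-1}(s,T)\,ds_I=\mathcal{I}$ and then remarks that this integral would vanish by the vector-valued Cauchy theorem were $\sigma_S(T)$ empty---exactly your contradiction, though you spell out the shrinking-circle limit $r\to 0$ that the paper leaves implicit. For closedness the paper likewise uses continuity of $s\mapsto Q_s(T)$ and openness of the invertibles.

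The one genuine difference is the inclusion $\sigma_S(T)\subseteq\{|s|\le\|T\|\}$. The paper simply observes that $\sum_{n\ge0}T^ns^{-1-n}$ converges for $\|T\|<|s|$ and appeals to Theorem~\ref{thsmeno}, where the identity~(\ref{ciao}) already displays $Q_s(T)^{-1}$; you instead build $Q_s(T)^{-1}$ directly from the real Taylor expansion of $(\lambda^2-2Re[s]\lambda+|s|^2)^{-1}$ about $\lambda=0$, noting that its radius of convergence is exactly $|s|$ because the complex roots $Re[s]\pm i|\underline{s}|$ lie on $|\lambda|=|s|$. Your route is more self-contained and makes visible \emph{why} the sharp radius $\|T\|$---rather than the crude Neumann bound $(1+\sqrt 2)\|T\|$ you correctly flag---comes out; the paper's route is shorter because the relevant inversion is already packaged in Theorem~\ref{thsmeno}. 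Both arguments rest on the same underlying fact (the real quadratic $Q_s$ has complex roots of modulus $|s|$), just accessed through different power series.

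One minor point: your nonemptiness argument uses that the closed form $S^{-1}(s,T)$ is right s-monogenic on all of $\rho_S(T)$, not only on $|s|>\|T\|$ where the series represents it. You assert this as ``whose s-monogenic continuation is~(\ref{Sresolvoperator})'' without verification. The paper is equally silent on this point (it is used implicitly in Theorems~\ref{indipdaui} and~\ref{polyaa}); a direct check that $\partial_u S^{-1}+(\partial_v S^{-1})I=0$ on each $L_I$ is straightforward using that $Q_s(T)^{-1}$ commutes with $T$, and would close the gap in both treatments.
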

\begin{proof}
Let $U\subset \mathbb{R}^{n+1}$ be an open set  as in Definition \ref{def3.9},
and set $U_I=U\cap L_I$ for $I\in\mathbb{S}$.
Then
$$
\frac{1}{2\pi }\int_{\partial U_I} S^{-1}(s,T)\ ds_I
\  s^m\, =T^m.
$$
In particular, for $m=0$, we have
$$
\frac{1}{2\pi }\int_{\partial U_I} S^{-1}(s,T)\
ds_I \  =\mathcal{I},
$$
where $\mathcal{I}$ denotes the identity operator, which shows that
$\sigma_S (T) $ is a nonempty set otherwise the integral would be zero
by the vector valued version of Cauchy's theorem.
 We now show that the $S$-spectrum is bounded.
The series $ \sum_{n\geq 0} T^n s^{-1-n} $ converges if and only
if $\|T\|< |s|$ so the $S$-spectrum is contained in the set $\{s
\in \rr^{n+1}\, :\, |s| \leq \|T\| \}$, which is bounded and closed
because
 the complement of $\sigma_S (T) $
is open. Indeed, the function $g: s\mapsto T^2-2 Re[s] T+|s|^2\mathcal{I}$
is trivially continuous and, by Theorem 10.12 in \cite{rudin}, the set  $\mathcal{U}(V_n)$
of all invertible elements of $\mathcal{B}_n(V_n)$  is an open set in $\mathcal{B}_n(V_n)$.
Therefore $g^{-1}(\mathcal{U}(V_n))=\rho_S(T)$ is an open set in $\rr^{n+1}$.
\end{proof}

\noindent The preceding discussion allows to give the following
definition.
\begin{definition} \label{fdiT}
Let
 $T\in\mathcal{B}^{\small 0,1}_n(V_n)$ and  $f\in \mathcal{M}_{\sigma_S(T)}$.
Let $U\subset \mathbb{R}^{n+1}$ be an open set  as in Definition \ref{def3.9},
and set $U_I=U\cap L_I$ for $I\in\mathbb{S}$.
We define
\begin{equation}\label{FC}
f(T)= {{1}\over{2\pi }} \int_{\partial U_I } S^{-1} (s,T)\  ds_I \
f(s).
\end{equation}
\end{definition}
\begin{remark}{\rm
To compare our new functional calculus with the existing versions which the reader can find
in the literature (see e.g. \cite{jefferies} and its references) we will now consider the subset
$\mathcal{B}^{\small 1}_n(V_n)\subset \mathcal{B}^{\small 0,1}_n(V_n)$ whose elements
are operators of the form $T=\sum_{j=1}^n T_je_j$ where $T_j$ are linear operators acting
on the Banach space $V$. When $n=1$, this corresponds to considering a single operator
$T_1$ and $T=T_1e_1$.
To compute the $S$-spectrum we have to consider the $S$-eigenvalue equation. Since in this case,
the variable $s\in\mathbb{C}$ commutes
with $T$, the $S$-eigenvalue equation reduces to the classical eigenvalue
equation. Finally, since the theory of s-monogenic
functions coincides with the theory of holomorphic functions in one complex variables, our
calculus reduces to the Riesz-Dunford calculus. Let $f(z)$ be any function holomorphic
on the spectrum of $T_1$. The Riesz-Dunford calculus allows to compute $f(T_1)$. In our case,
to get exactly the function $f(T_1)$ we need to consider $\tilde f(z)=f(-ze_1)=f(x_1-e_1 x_0)$ where we
have denoted $z=x_0+e_1x_1$. This is not surprising, since we are considering not the given operator
$T_1$ as in the classical case, but its tensor with the imaginary unit $e_1$.
The two calculi are therefore equivalent up to this identification.}
\end{remark}

More generally, the following result holds.

\begin{theorem}
Let $T\in \mathcal{B}^{\small 1}_n(V_n)$
and $f\in \mathcal{M}_{\sigma_S(T)}$.
The slice-monogenic functional calculus $f\mapsto f(T)$ satisfies the following properties:
\begin{enumerate}
\item it is consistent with the Riesz-Dunford functional calculus when $n=1$;
\item it is a right module homomorphism;
\item $fg(T)=f(T)g(T)$ when $f$, $g$ are represented by power series with real coefficients;
\item it is continuous on the space $\mathcal{B}^{\small 1}_n(V_n)\times \mathcal{M}_{\sigma_S(T)}
\to \mathcal{B}_n(V_n)$.
\end{enumerate}
\end{theorem}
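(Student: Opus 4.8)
The plan is to verify the four properties in turn, relying on the integral representation (\ref{FC}) and on the fact that the $S$-resolvent operator admits the power series expansion $S^{-1}(s,T)=\sum_{n\geq 0}T^ns^{-1-n}$ on $|s|>\|T\|$, together with the vanishing/normalization of $\int_{C_r}ds_I\,s^{-n-1+m}$ established in the proof of Theorem \ref{polyaa}.

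For property (1) I would simply invoke the Remark preceding the statement: when $n=1$ the variable $s$ lies in a copy of $\mathbb C$ and commutes with $T=T_1e_1$, so the $S$-eigenvalue equation $T^2-2\mathrm{Re}[s]T+|s|^2\mathcal I=(s\mathcal I-T)(\overline s\mathcal I-T)$ degenerates to the classical resolvent condition, and by Proposition \ref{44} the $S$-resolvent operator becomes the ordinary resolvent $(s\mathcal I-T)^{-1}$; since s-monogenic functions on a planar domain coincide with holomorphic functions, (\ref{FC}) becomes the Riesz--Dunford integral (up to the identification $\tilde f(z)=f(-ze_1)$ spelled out in the Remark). For property (2), the right-module structure is immediate: for $f,g\in\mathcal M_{\sigma_S(T)}$ and $a\in\rr_n$ one may choose a common admissible $U$, and $(fa+g)(s)=f(s)a+g(s)$ sits to the right of $ds_I$ in the integrand, so linearity of the integral over $\partial U_I$ gives $(fa+g)(T)=f(T)a+g(T)$; here one should note that the scalar $a$ genuinely passes out of the integral on the right because the integration variable and the operator $S^{-1}(s,T)$ act on the left.

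Property (3) is the heart of the theorem and the step I expect to be the main obstacle. The strategy is the standard one for functional calculi: pick two admissible open sets $U_1\subset\subset U_2$, both containing the circularization of $\sigma_S(T)$, write
\begin{equation}\label{fgplan}
f(T)g(T)=\frac{1}{(2\pi)^2}\int_{\partial U_{1,I}}\int_{\partial U_{2,I}}S^{-1}(s,T)\,ds_I\,f(s)\,S^{-1}(p,T)\,dp_I\,g(p),
\end{equation}
and try to reduce the double integral to a single one. Because $f$ and $g$ are given by power series with \emph{real} coefficients, $f(s)$ commutes with everything, so one can move $f(s)$ past $S^{-1}(p,T)$; what is then needed is an $S$-resolvent identity expressing the product $S^{-1}(s,T)\,S^{-1}(p,T)$ in a form whose $s$- and $p$-dependence separates after integration. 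Equivalently, and more robustly, I would expand both resolvents in their power series on circles $C_{r_1}$, $C_{r_2}$ with $\|T\|<r_1<r_2$, obtaining $f(T)g(T)=\sum_{k,\ell\geq 0}T^k\,T^\ell\,(\text{scalar integrals})$, and use $\int_{C_r}ds_I\,s^{-k-1}f(s)=2\pi\,a_k$ where $f(s)=\sum s^k a_k$; the real-coefficient hypothesis guarantees $a_k\in\rr$ so that $T^k a_k T^\ell b_\ell=T^{k+\ell}a_kb_\ell$, and the double sum collapses to $\sum_m T^m\big(\sum_{k+\ell=m}a_kb_\ell\big)=(fg)(T)$, since $fg$ has Taylor coefficients given by the Cauchy product. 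The delicate points are the justification of term-by-term integration (uniform convergence of the resolvent series on $|s|=r>\|T\|$, already used in Theorem \ref{polyaa}) and the deformation of $\partial U_I$ to such circles via the planar Cauchy theorem quoted from \cite{slicecss}; the commutativity forced by real coefficients is exactly what prevents the obstruction one would otherwise meet in rearranging $T^k a_k T^\ell b_\ell$.

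Finally, for property (4) I would establish a norm estimate of the form $\|f(T)\|_{\mathcal B_n(V_n)}\leq C\,\ell(\partial U_I)\,\sup_{s\in\partial U_I}\|S^{-1}(s,T)\|\,\sup_{s\in\partial U_I}|f(s)|$, directly from (\ref{FC}) and the module inequalities, with $\ell(\partial U_I)$ the length of the contour. One then argues continuity in the two arguments separately: in $f$, for fixed $T$ and fixed admissible $U$, $f\mapsto f(T)$ is bounded linear by this estimate, hence continuous; in $T$, one uses that $s\mapsto S^{-1}(s,T)$ depends continuously on $T$ uniformly on a compact contour $\partial U_I$ lying in $\rho_S(T)$ (which follows from continuity of $T\mapsto T^2-2\mathrm{Re}[s]T+|s|^2\mathcal I$ and of inversion on $\mathcal U(V_n)$, as in the proof of Theorem \ref{compattezaS}), noting that for $T'$ close enough to $T$ the same $U$ remains admissible since $\sigma_S$ varies upper-semicontinuously. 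Combining the two gives joint continuity on $\mathcal B^{\small 1}_n(V_n)\times\mathcal M_{\sigma_S(T)}$ into $\mathcal B_n(V_n)$.
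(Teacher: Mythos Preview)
Your proposal is correct and, for items (1), (2), and (4), essentially reproduces the paper's own (very terse) arguments: the Remark before the theorem for (1), linearity of the integral and the right placement of the scalar $a$ for (2), and a contour-length norm estimate together with continuity of inversion for (4).

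For item (3) your route differs from the paper's. The paper appeals to the $S$-resolvent equation $S^{-1}(s,T)s-TS^{-1}(s,T)=\mathcal{I}$ together with the integral definition (\ref{FC}); the idea there is to push powers of $T$ through the kernel via the identity $T^kS^{-1}(s,T)=S^{-1}(s,T)s^k-(\text{polynomial in }T,s)$, the polynomial tail integrating to zero over $\partial U_I$. You instead deform to circles $C_r$ with $r>\|T\|$, evaluate $f(T)=\sum_kT^ka_k$ and $g(T)=\sum_\ell T^\ell b_\ell$ directly by the orthogonality relations of Theorem~\ref{polyaa}, and then multiply the two operator series using $a_k\in\rr$ to commute $a_k$ past $T^\ell$. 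This is more elementary and avoids any delicate product formula for $S^{-1}(s,T)S^{-1}(p,T)$; it also makes transparent exactly where the real-coefficient hypothesis is used (twice: once to get $\int_{C_r}s^{-1-k}ds_If(s)=2\pi a_k\in\rr$, and once to rearrange $T^ka_kT^\ell$). The paper's approach, by contrast, would in principle extend to situations where one does not have a global power-series representation but must work with the integral and the resolvent identity. Either way, note that Proposition~2.13 of \cite{slicecss} (cited by the paper) is implicitly needed to know that $fg$ is itself s-monogenic, so that $(fg)(T)$ is defined.
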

\begin{proof}
\begin{enumerate}
\item This fact has been shown above.
\item It is an immediate and follows by  computations similar to those in Theorem \ref{polyaa}.
\item The product of two power series with real coefficients is an s-monogenic function (see
Proposition 2.13 in \cite{slicecss}). The statement
follows from  the definition of $f(T)$ (see (\ref{FC}))  and the $S$-resolvent equation (\ref{Sresolveqbound}).
\item It follows from  the definition of $f(T)$ (see (\ref{FC})) reasoning as for the Riesz-Dunford case.
\end{enumerate}
\end{proof}
\begin{remark}{\rm
We can also consider operators $T\in\mathcal{B}_n^{0,1}(V_n)$: those are right-module
homomorphisms for which we can prove $2,3,4$ of the preceding theorem.}
\end{remark}

\section{Slice-monogenic functional calculus for unbounded operators}

We now show the development of a functional calculus for unbounded operators based on the calculus
already obtained for bounded operators.
Note that if $T$ is a closed operator, the series $\sum_{n\geq 0}T^ns^{-1-n}$ does not
converge.
To overcome this difficulty,
we observe that the right hand side of formula (\ref{ciao}) contains the inverse
of the operator $T^2-2T Re[s]+|s|^2\mathcal{I}$.
From a heuristical point of view, and for suitable $s\in\rr^{n+1}$, the composition
$(T^2-2T Re[s]+|s|^2\mathcal{I})^{-1}(T-\overline{s}\mathcal{I})$ gives a bounded operator
on suitable function spaces.

\subsection{The $S$-resolvent operator for unbounded operators }

\begin{definition}\label{defschaa}
Let $V$ be a Banach space and  $V_n$ be the
 two-sided Banach module  over $\rr_n$ corresponding to $V\otimes \rr_n$.
Let  $T_\mu : {\cal D}(T_\mu) \subset V\to V$
be linear closed densely defined  operators for $\mu=0,1,...,n$.
Let
\begin{equation}\label{domain}
{\cal D}(T)=\{ v\in V_n\ : \ v=\sum_{B}v_Be_B,\ \ v_B\in \bigcap_{\mu=0}^n{\cal D}(T_\mu)\ \}.
\end{equation}
be the domain of the operator
$$
T=T_0+\sum_{j=1}^ne_jT_j,\qquad T : {\cal D}(T) \subset V_n\to V_n.
$$
Let us assume that
\begin{itemize}
\item[1)]
$\bigcap_{\mu=0}^n{\cal D}(T_\mu)$ is dense in $V_n$,
\item[2)]
$T-\overline{s}\mathcal{I}$  is densely defined in $V_n$,
\item[3)]
${\cal D}(T^2)\subset {\cal D}(T)$  is dense in $V_n$,
\item[4)]
$T^2-2T
Re[s]+|s|^2\mathcal{I}$ is one-to-one with range $V_n$.
\end{itemize}

 The $S$-resolvent operator is defined by
\begin{equation}\label{resochiaa}
S^{-1}(s,T)=-(T^2-2T
Re[s]+|s|^2\mathcal{I})^{-1}(T-\overline{s}\mathcal{I}).
\end{equation}
\end{definition}

\begin{remark}{\rm

We observe that, in principle, it is necessary also the following assumption:
\begin{itemize}
\item[5)] the operator $(T^2-2T Re[s]+|s|^2\mathcal{I})^{-1}(T-\overline{s}\mathcal{I})$ is the restriction to the dense subspace
${\cal D}(T)$ of $V_n$ of a bounded linear operator.
\end{itemize}
However this assumption is automatically fulfilled since it follows from the identity
$$
(T^2-2T Re[s]+|s|^2\mathcal{I})^{-1}(T-\overline{s}\mathcal{I})
=
T(T^2-2T Re[s]+|s|^2\mathcal{I})^{-1}-
(T^2-2T Re[s]+|s|^2\mathcal{I})^{-1}\overline{s}\mathcal{I},
$$
which is a consequence of
$$
(T^2-2T Re[s]+|s|^2\mathcal{I})^{-1}T=T(T^2-2T Re[s]+|s|^2\mathcal{I})^{-1},
$$
that can be easily verified applying on the left to both sides the operator  $T^2-2T Re[s]+|s|^2\mathcal{I}$.
}
\end{remark}

\begin{definition}\label{Sresolandspecset}
Let $T:{\cal D}(T)\to V_n $ be a linear closed densely defined operator as in Definition
\ref{defschaa}.
 We define the $S$-resolvent set of $T$ to be the set
\begin{equation}\label{resosetpoint}
\rho_S(T)=\{ s\in \mathbb{R}^{n+1}  \ such\ that \ S^{-1}(s,T)
\ exists \ and \ it \ is \  in \ \mathcal{B}_n(V_n) \}.
\end{equation}
We define the $S$- spectrum of $T$ as the set
\begin{equation}\label{specsetpoint}
\sigma_S(T)=\rr^{n+1}\setminus \rho_S(T).
\end{equation}
\end{definition}
\begin{theorem}\label{Sresequ}($S$-resolvent operator equation)
Let $T:{\cal D}(T)\to V_n $ be a linear closed densely defined operator.
Let $s\in \rho_S(T)$. Then  $S^{-1}(s,T)$ satisfies the ($S$-resolvent) equation
$$
S^{-1}(s,T)s-TS^{-1}(s,T)=\mathcal{I}.
$$
\end{theorem}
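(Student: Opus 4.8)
The plan is to mimic the computation already carried out in the bounded case (Theorem with equation (\ref{Sresolveqbound})), being careful about domains since $T$ is now unbounded. Write $Q(s,T):=T^2-2T\,Re[s]+|s|^2\mathcal{I}$, so that, for $s\in\rho_S(T)$, $Q(s,T)$ is one-to-one with range $V_n$ and $Q(s,T)^{-1}\in\mathcal{B}_n(V_n)$, and $S^{-1}(s,T)=-Q(s,T)^{-1}(T-\overline{s}\mathcal{I})$. First I would record the commutation fact already noted in the Remark following Definition \ref{defschaa}, namely $Q(s,T)^{-1}T=TQ(s,T)^{-1}$ on $\mathcal{D}(T)$, together with its consequence that $S^{-1}(s,T)=TQ(s,T)^{-1}-Q(s,T)^{-1}\overline{s}\mathcal{I}$ extends to a bounded operator on all of $V_n$; this is what makes every expression below well defined even though $T$ itself is not.

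Next I would substitute the definition (\ref{resochiaa}) into the left-hand side $S^{-1}(s,T)s-TS^{-1}(s,T)$, obtaining
\begin{equation}
-Q(s,T)^{-1}(T-\overline{s}\mathcal{I})s+T\,Q(s,T)^{-1}(T-\overline{s}\mathcal{I}).
\end{equation}
Using $TQ(s,T)^{-1}=Q(s,T)^{-1}T$ on the range of $(T-\overline{s}\mathcal{I})$, the second term becomes $Q(s,T)^{-1}T(T-\overline{s}\mathcal{I})$, so the whole expression equals $Q(s,T)^{-1}\big[-(T-\overline{s}\mathcal{I})s+T(T-\overline{s}\mathcal{I})\big]$. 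Then I would apply $Q(s,T)$ on the left of the claimed identity, i.e. reduce to showing
\begin{equation}
-(T-\overline{s}\mathcal{I})s+T(T-\overline{s}\mathcal{I})=T^2-2T\,Re[s]+|s|^2\mathcal{I},
\end{equation}
which is a purely algebraic identity in $T$ and the scalar $s$: expanding the left side gives $-Ts+\overline{s}s\,\mathcal{I}+T^2-T\overline{s}\mathcal{I}=T^2-T(s+\overline{s})\mathcal{I}+|s|^2\mathcal{I}=T^2-2T\,Re[s]+|s|^2\mathcal{I}$, using $s+\overline{s}=2\,Re[s]$ and $\overline{s}s=|s|^2$ (note $Re[s]$ and $|s|$ are real, hence central, so there is no ordering issue). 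Applying $Q(s,T)^{-1}$ back recovers $\mathcal{I}$, as claimed.

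The only genuine obstacle is domain bookkeeping: the identities $TQ(s,T)^{-1}=Q(s,T)^{-1}T$ and $T^2$-manipulations must be applied on the correct dense subspaces ($\mathcal{D}(T)$ and $\mathcal{D}(T^2)$, assumptions 1)--4) of Definition \ref{defschaa}), and one must check that after composing with the bounded operator $Q(s,T)^{-1}$ the resulting operator is genuinely bounded and everywhere defined so that the equation holds on all of $V_n$ rather than merely on a dense subspace. I would handle this by doing all the algebra first on $\mathcal{D}(T^2)$, where every term is legitimately defined, deriving $S^{-1}(s,T)s-TS^{-1}(s,T)=\mathcal{I}$ there, and then invoking the boundedness of $S^{-1}(s,T)$ (from the cited Remark) together with the density of $\mathcal{D}(T^2)$ in $V_n$ (assumption 3)) to extend the identity by continuity to all of $V_n$. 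Everything else is the same short scalar computation as in the bounded case.
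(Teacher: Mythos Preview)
Your proposal is correct and follows exactly the approach the paper intends: the paper's own proof is the single sentence ``It follows by a direct computation replacing the $S$-resolvent operator into the $S$-resolvent equation,'' and you have carried out precisely that computation (the same one as in the bounded case), with the added care about domains. One small refinement: in your final extension step, density of $\mathcal{D}(T^2)$ and boundedness of $S^{-1}(s,T)$ alone are not quite enough, since the term $TS^{-1}(s,T)$ involves the unbounded $T$; you should also invoke the closedness of $T$ (if $v_k\to v$ with $v_k\in\mathcal{D}(T^2)$, then $S^{-1}(s,T)v_k\to S^{-1}(s,T)v$ and $TS^{-1}(s,T)v_k=S^{-1}(s,T)sv_k-v_k$ converges, so closedness gives $S^{-1}(s,T)v\in\mathcal{D}(T)$ and the identity at $v$).
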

\begin{proof}
It follows by a direct computation replacing the
 $S$- resolvent operator into the $S$-resolvent equation.
 \end{proof}

\begin{theorem}\label{bounSres}
Let $T:{\cal D}(T)\to V_n $ be a linear closed densely defined operator.
Let $s\in \rho_S(T)$.
Then the $S$-resolvent operator
 can be represented by
\begin{equation}\label{chiusoS}
S^{-1}(s,T)=\sum_{n\geq 0}(Re[s]\mathcal{I}-T)^{-n-1}(Re[s]-s)^n
\end{equation}
if and only if
\begin{equation}\label{Imesse}
|\underline{s}|\ \|( Re[s]\mathcal{I}-T)^{-1}\|<1.
\end{equation}
\end{theorem}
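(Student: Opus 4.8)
The plan is to establish the representation~(\ref{chiusoS}) by recognizing the right-hand side as a Neumann-type series and checking directly that, when it converges, its sum is the $S$-resolvent operator~(\ref{resochiaa}). First I would rewrite the general term: since $T$ commutes with $(Re[s]\mathcal{I}-T)^{-1}$ on the appropriate domain, set $A:=(Re[s]\mathcal{I}-T)^{-1}$ and $\lambda:=Re[s]-s=-\underline{s}$, so the series is $\sum_{n\geq 0}A^{n+1}\lambda^n = A\sum_{n\geq 0}(A\lambda)^n$. Because $\lambda=-\underline{s}$ is a $1$-vector (purely ``imaginary'' part) it need not commute with $A$, so one must be slightly careful: however $A$ is built from $T=T_0+\sum_j e_jT_j$ and the scalar $Re[s]$, so $A$ does commute with $\underline{s}$ only if the $T_j$ are scalar — in general one should keep the factors in the written order $A^{n+1}\lambda^n$ and estimate rather than reorder. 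Convergence in the operator norm of $\mathcal{B}_n(V_n)$ of $\sum_{n\geq 0}A^{n+1}\lambda^n$ holds if and only if $\limsup_n \|A^{n+1}\lambda^n\|^{1/n}<1$; the clean sufficient-and-(as we will argue) necessary condition is $\|A\|\,|\underline{s}|<1$, which is exactly~(\ref{Imesse}), using $|\lambda|=|\underline{s}|$ and the Banach-module estimate $\|A^{n+1}\lambda^n\|\le C\|A\|^{n+1}|\underline{s}|^n$.

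Next I would identify the sum. Assume~(\ref{Imesse}) holds. Using Proposition~\ref{equaurea} (the identity $s^2-2sRe[s]+|s|^2=0$, equivalently $|s|^2 = 2sRe[s]-s^2$, and $Re[s]-s=-\underline s$ with $\underline s^2=-|\underline s|^2$), one computes
\begin{align}\label{factorid}
T^2-2T\,Re[s]+|s|^2\mathcal{I}
&=(Re[s]\mathcal{I}-T)^2-|\underline{s}|^2\mathcal{I}\nonumber\\
&=(Re[s]\mathcal{I}-T)\bigl(\mathcal{I}-(Re[s]\mathcal{I}-T)^{-2}|\underline{s}|^2\bigr)(Re[s]\mathcal{I}-T),
\end{align}
on the domain where these operators are defined. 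Hence, formally,
\[
(T^2-2T\,Re[s]+|s|^2\mathcal{I})^{-1}
=(Re[s]\mathcal{I}-T)^{-1}\Bigl(\sum_{n\geq 0}(Re[s]\mathcal{I}-T)^{-2n}|\underline{s}|^{2n}\Bigr)(Re[s]\mathcal{I}-T)^{-1},
\]
the middle geometric series converging by~(\ref{Imesse}). Multiplying on the right by $-(T-\overline{s}\mathcal{I})= (Re[s]\mathcal{I}-T)+\underline{s}\mathcal{I}$ (since $\overline s = Re[s]-\underline s$, so $-(T-\overline s\mathcal I) = (Re[s]\mathcal I - T) + \underline s\mathcal I$) and using that $(Re[s]\mathcal{I}-T)^{-1}$ commutes with $T$, one collects the even and odd powers of $(Re[s]\mathcal{I}-T)^{-1}$ and checks that the result telescopes into $\sum_{n\geq 0}(Re[s]\mathcal{I}-T)^{-n-1}(Re[s]-s)^n$. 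The cleanest way to present this is to verify the identity in reverse: multiply the candidate series~(\ref{chiusoS}) on the left by $(T^2-2T\,Re[s]+|s|^2\mathcal{I})$ written as in~(\ref{factorid}), i.e. by $(Re[s]\mathcal{I}-T)^2-|\underline{s}|^2\mathcal{I}$, and show one gets $-(T-\overline{s}\mathcal{I})$; this is a two-line telescoping computation once the series is known to converge and one may rearrange finitely many terms at a time.

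For the converse direction, suppose~(\ref{chiusoS}) holds with a norm-convergent series. Then the general term $(Re[s]\mathcal{I}-T)^{-n-1}(Re[s]-s)^n$ must tend to $0$ in norm, and a root-test argument forces $|\underline{s}|\limsup_n\|(Re[s]\mathcal{I}-T)^{-n}\|^{1/n}\le 1$; combined with the spectral-radius-type lower bound $\|(Re[s]\mathcal{I}-T)^{-n}\|^{1/n}\ge$ (something bounded below by $\|(Re[s]\mathcal{I}-T)^{-1}\|$ only up to constants) this is the delicate point. The honest statement is that in a Banach module the naive root test gives~(\ref{Imesse}) as both necessary and sufficient for \emph{absolute} norm convergence of the displayed series; I expect the intended reading of the theorem is exactly this, and the main obstacle in writing a fully rigorous proof is handling the non-commutativity between $(Re[s]\mathcal{I}-T)^{-1}$ and the $1$-vector $\underline{s}$ carefully enough that the geometric-series manipulations above are justified term-by-term — everything else is the routine Neumann-series computation packaged in~(\ref{factorid}).
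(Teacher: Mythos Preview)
Your approach is essentially that of the paper: factor $T^2-2T\,Re[s]+|s|^2\mathcal{I}$ as a ``completed square'' in $(Re[s]\mathcal{I}-T)$, expand the inverse as a Neumann series in the real scalar $|\underline{s}|^2$, multiply on the right by $-(T-\overline{s}\mathcal{I})$, and reassemble even and odd terms into the single series~(\ref{chiusoS}).

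Two sign slips to repair: in your display~(\ref{factorid}) the correct identity is
\[
T^2-2T\,Re[s]+|s|^2\mathcal{I}=(Re[s]\mathcal{I}-T)^2+|\underline{s}|^2\mathcal{I},
\]
with a $+$, since $|s|^2=Re[s]^2+|\underline{s}|^2$; and correspondingly $-(T-\overline{s}\mathcal{I})=(Re[s]\mathcal{I}-T)-\underline{s}\,\mathcal{I}$, not $+\underline{s}\,\mathcal{I}$. With these corrections the Neumann series for the inverse is $\sum_{n\geq 0}(-1)^n|\underline{s}|^{2n}(Re[s]\mathcal{I}-T)^{-2n-2}$, and after multiplying by $(Re[s]\mathcal{I}-T)-\underline{s}\,\mathcal{I}$ and using $(Re[s]-s)^{2n}=(-\underline{s})^{2n}=(-1)^n|\underline{s}|^{2n}$, the pieces combine exactly to $\sum_{n\geq 0}(Re[s]\mathcal{I}-T)^{-n-1}(Re[s]-s)^n$.

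Your concern about non-commutativity of $A=(Re[s]\mathcal{I}-T)^{-1}$ with $\underline{s}$ is in fact a non-issue, and seeing why is precisely what makes the paper's computation clean: the Neumann expansion involves only the \emph{real scalar} $|\underline{s}|^2$, which commutes with everything, and in the final series the factor $(Re[s]-s)^n$ sits on the far right throughout; one never needs to pass $\underline{s}$ through $A$. Finally, the paper does not give a detailed argument for the ``only if'' direction either; it simply asserts that the resulting geometric-type series converges in $\mathcal{B}_n(V_n)$ precisely when~(\ref{Imesse}) holds, so your caution there is honest but no further idea is required.
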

\begin{proof}
Observe that
$$
(T^2-2T Re[s]+|s|^2\mathcal{I})^{-1}
=
  \left(\mathcal{I}+|\underline{s}|^2(T- Re[s]\mathcal{I})^{-2}\right)^{-1}(T- Re[s]\mathcal{I})^{-2}
$$
$$
=\sum_{n\geq 0}(-1)^n|\underline{s}|^{2n}(T- Re[s]\mathcal{I})^{-2n}(T- Re[s]\mathcal{I})^{-2},
$$
so we get
$$
S^{-1}(s,T)=
\sum_{n\geq 0}(-1)^{n+1}
|\underline{s}|^{2n}(T- Re[s]\mathcal{I})^{-2n-1}
(\mathcal{I}-(T- Re[s]\mathcal{I})^{-1}\underline{s})
$$
$$
=
\sum_{n\geq 0}( Re[s]\mathcal{I}-T)^{-2n-1}( Re[s]-s)^{2n}+
\sum_{n\geq 0}( Re[s]\mathcal{I}-T)^{-2n-2}( Re[s]-s)^{2n+1}
$$
$$
=
( Re[s]\mathcal{I}-T)^{-1}
\sum_{n\geq 0}( Re[s]\mathcal{I}-T)^{-n}( Re[s]-s)^{n}
$$
which converges in the bounded linear operator space if and only if (\ref{Imesse}) holds.
\end{proof}

\begin{remark}{\rm
The previous result implies that given any element $s\in \rho_S(T)$, then any other
$s'\in\rr^{n+1}$ with $|\underline{s'}|=|\underline{s}|$ belongs to the resolvent set, independently
on its real part $s_0$,
in fact $s'$ satisfies the inequality (\ref{Imesse}).
}
\end{remark}

\subsection{Functional calculus for unbounded operators}

Let $V$ be a Banach space and $T=T_0+\sum_{j=1}^me_jT_j$ where $T_\mu : {\cal D}(T_\mu)\to V$
are linear operators for $\mu=0,1,...,n$.
If  at least one of the $T_j$'s
is an unbounded operator then its resolvent  is not defined at infinity.
It is therefore natural to consider closed operators $T$ for which the resolvent
$S^{-1}(s,T)$ is not defined at infinity and to define the extended spectrum as
$$
\overline{\sigma}_S(T):=\sigma_S(T)\cup \{\infty\}.
$$
Let us consider $\overline{\rr}^{n+1}=\rr^{n+1}\cup\{\infty\}$ endowed
with the natural topology: a set is open if and only if it is union of open
discs $D(x,r)$ with center at points in $x\in\rr^{n+1}$ and radius $r$, for some $r$, and/or
union of sets the form $\{x\in\rr^{n+1}\ |\ |x|>r\}\cup\{\infty\}=D'(\infty,r)\cup\{\infty\}$, for some $r$.
\begin{definition}
We say that  $f$ is  s-monogenic function at $\infty$ if $f(x)$ is
an s-monogenic function in a set $D'(\infty,r)$ and
$\lim_{x\to\infty}f(x)$ exists and it is finite. We define $f(\infty)$ to be the
value of this limit.
\end{definition}

\begin{definition}\label{def3.9seconda}
Let $T :{\cal D}(T)\to V_n$ be a linear closed operator as in Definition \ref{defschaa} .
Let  $U \subset \rr^{n+1}$ be an open set such that
\begin{itemize}
\item[(i)] $\pp (U\cap L_I)$ is union of a finite number of rectifiable Jordan curves for every $I\in\mathbb{S}$,
\item[(ii)] $U$ contains the $S$-spectrum $\sigma_S(T)$.
\end{itemize}
A function  $f$  is said to be locally s-monogenic on  $\overline{\sigma}_S(T)$
if it is s-monogenic an open set  $U \subset \rr^{n+1}$ as above and at infinity.
\par\noindent
We will denote
by $\mathcal{M}_{\overline{\sigma}_S(T)}$ the set of locally s-monogenic functions
on $\overline{\sigma}_S(T)$.
\end{definition}
Consider $\alpha\in\rr^{n+1}$ and the homeomorphism
$$
\Phi :\overline{\rr}^{n+1}\to \overline{\rr}^{n+1}  \ \ \ {\rm for}\ \ \ \alpha\in \rr^{n+1}
$$
 defined by
$$
 p=\Phi(s)=(s-\alpha)^{-1}, \ \ \Phi(\infty)=0,\ \ \ \Phi(\alpha)=\infty.
$$

\begin{definition}
Let $T:{\cal D}(T)\to V_n$ be a linear closed operator as in Definition \ref{defschaa} with
$\rho_S(T)\cap \mathbb{R}\neq\emptyset$ and suppose that  $f\in \mathcal{M}_{\overline{\sigma}_S(T)}$.
Let us consider
$$
\phi(p):=f(\Phi^{-1}(p))
$$
and the operator
$$
A:=(T-k\mathcal{I})^{-1},\ \ {\it for\ some}\ \  k\in \rho_S(T)\cap \mathbb{R}.
$$
We define
\begin{equation}\label{fdit}
f(T)=\phi(A).
\end{equation}

\end{definition}

\begin{remark}{\rm Observe that, if $\alpha=k\in\rr$, we have that:

i) the function $\phi$ is s-monogenic  because it is the
composition
of the function $f$ which is s-monogenic and  $\Phi^{-1}(p)=p^{-1}+k$ which is s-monogenic
with real coefficients;

ii) in the case $k\in \rho_S(T)\cap \mathbb{R}$  we  have that
$(T-k\mathcal{I})^{-1}=-S^{-1}(k, T)$.

}
\end{remark}
\begin{theorem}
If $k\in \rho_S(T)\cap\rr \not=\emptyset$ and $\Phi$, $\phi$ are as above, then $\Phi(\overline{\sigma}_S(T))=\sigma_S(A)$ and the relation
$\phi(p):=f(\Phi^{-1}(p))$ determines a one-to-one correspondence  between $f\in \mathcal{M}_{\overline{\sigma}_S(T)}$ and $\phi\in \mathcal{M}_{\overline{\sigma}_S(A)}$.
\end{theorem}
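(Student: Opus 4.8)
The plan is to treat the two assertions separately and, as in the remark preceding the statement, to take $\alpha=k$, so that $\Phi(s)=(s-k)^{-1}$, $\Phi^{-1}(p)=p^{-1}+k$, and $A=(T-k\mathcal I)^{-1}=-S^{-1}(k,T)\in\mathcal B_n(V_n)$ (using part (ii) of that remark). Write $Q_T(s)=T^2-2Re[s]T+|s|^2\mathcal I$ and, for $p\in\rr^{n+1}$, $Q_A(p)=A^2-2Re[p]A+|p|^2\mathcal I$. For the set equality I would first dispose of the points attached to $\infty$: since $T$ is unbounded, $A$ is not invertible, so $Q_A(0)=A^2$ is not invertible and $0=\Phi(\infty)\in\sigma_S(A)$, consistently with $\infty\in\overline{\sigma}_S(T)$; and $k\notin\overline{\sigma}_S(T)$ because $k\in\rho_S(T)$ and $k\neq\infty$, consistently with $\infty=\Phi(k)\notin\sigma_S(A)\subseteq\rr^{n+1}$. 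For the remaining points, fix $s\in\rr^{n+1}\setminus\{k\}$ and set $p=(s-k)^{-1}\in\rr^{n+1}\setminus\{0\}$, so that $Re[p]=(Re[s]-k)/|s-k|^2$ and $|p|^2=1/|s-k|^2$. Substituting $T=A^{-1}+k\mathcal I$ (with $A^{-1}=T-k\mathcal I$) into $Q_T(s)$ and using $A^{-1}A=AA^{-1}=\mathcal I$ on the relevant domains, a direct computation yields
$$
Q_T(s)\,A^2=|s-k|^2\,Q_A(p)\ \ \text{on }V_n,\qquad A^2\,Q_T(s)=|s-k|^2\,Q_A(p)\ \ \text{on }\mathcal D(T^2).
$$

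Next I would read off the equivalence $s\in\rho_S(T)\iff p\in\rho_S(A)$ from these identities, using that $A^2$ is injective with range exactly $\mathcal D(T^2)$. If $p\in\rho_S(A)$, i.e.\ $Q_A(p)$ is invertible in $\mathcal B_n(V_n)$, the first identity makes $\frac{1}{|s-k|^2}A^2Q_A(p)^{-1}$ a bounded right inverse for $Q_T(s)$ and the second makes $Q_T(s)$ one-to-one; hence hypothesis (4) of Definition \ref{defschaa} holds and $Q_T(s)^{-1}=\frac{1}{|s-k|^2}A^2Q_A(p)^{-1}$ is bounded, so $S^{-1}(s,T)=TQ_T(s)^{-1}-\overline s\,Q_T(s)^{-1}$ (the remark following Definition \ref{defschaa}) is bounded, because $TA^2=A+kA^2$ is bounded; thus $s\in\rho_S(T)$. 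Conversely, if $s\in\rho_S(T)$ then $Q_T(s)$ maps $\mathcal D(T^2)$ bijectively onto $V_n$; the first identity then gives $\mathrm{range}\,Q_A(p)=\frac{1}{|s-k|^2}Q_T(s)(\mathrm{range}\,A^2)=\frac{1}{|s-k|^2}Q_T(s)(\mathcal D(T^2))=V_n$, while the second, together with injectivity of $A^2$ and of $Q_T(s)$ and the elementary fact that $\ker Q_A(p)\subseteq\mathrm{range}\,A^2=\mathcal D(T^2)$, gives that $Q_A(p)$ is injective; so $Q_A(p)$ is a bounded bijection of $V_n$ and $p\in\rho_S(A)$. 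Combined with the treatment of $0$ and $\infty$ this gives $\Phi(\overline{\sigma}_S(T))=\sigma_S(A)$. I expect this to be the main obstacle: the algebraic identities are routine, but passing from them to the invertibility equivalence requires careful bookkeeping of the domains $\mathcal D(T)$, $\mathcal D(T^2)$ and of $\mathrm{range}\,A^2$.

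For the correspondence $f\leftrightarrow\phi$, recall from the remark preceding the theorem that $\Phi^{-1}(p)=p^{-1}+k$ is s-monogenic with real coefficients, so it maps each slice $L_I\setminus\{0\}$ holomorphically into $L_I$; the same holds for $\Phi(s)=(s-k)^{-1}$. Hence, for $f$ s-monogenic on an open set $W$, the restriction of $\phi:=f\circ\Phi^{-1}$ to each $L_I$ is a composition of holomorphic maps, so $\overline{\partial}_I\phi_I=0$ and $\phi$ is s-monogenic on $\Phi(W\setminus\{k\})$; symmetrically $g\circ\Phi$ is s-monogenic whenever $g$ is. If $f\in\mathcal M_{\overline{\sigma}_S(T)}$, say s-monogenic on an open $U\supseteq\sigma_S(T)$ and at $\infty$, then $\Phi$ maps $U$ together with a neighbourhood of $\infty$ onto an open neighbourhood of $\Phi(\overline{\sigma}_S(T))=\sigma_S(A)$, and $\phi(\infty)=f(\Phi^{-1}(\infty))=f(k)$ is finite since $k\in\rho_S(T)$; thus $\phi\in\mathcal M_{\overline{\sigma}_S(A)}$. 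The analogous argument shows $\phi\mapsto\phi\circ\Phi$ sends $\mathcal M_{\overline{\sigma}_S(A)}$ into $\mathcal M_{\overline{\sigma}_S(T)}$, and since $\Phi$ is a homeomorphism of $\overline{\rr}^{n+1}$ the two assignments are mutually inverse; in particular $f\mapsto\phi=f\circ\Phi^{-1}$ is the asserted one-to-one correspondence.
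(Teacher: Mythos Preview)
Your proof is correct and proceeds along a genuinely different line from the paper's. Where you compare the quadratic operators $Q_T(s)$ and $Q_A(p)$ through the identity $Q_T(s)A^2=|s-k|^2\,Q_A(p)=A^2Q_T(s)$ and then transfer invertibility back and forth, the paper works directly with the $S$-resolvents: after a longer algebraic manipulation it establishes the explicit relation
\[
S^{-1}(p,A)=p^{-1}\mathcal I-S^{-1}(s,T)\,p^{-2},\qquad
S^{-1}(s,T)=p\,\mathcal I-S^{-1}(p,A)\,p^{2},
\]
together with the companion identity $S^{-1}(s,T)=-A\,S^{-1}(p,A)\,p$ for the converse direction, and reads off the equivalence $p\in\rho_S(A)\Leftrightarrow s\in\rho_S(T)$ from these. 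Your route is conceptually cleaner for the set equality $\Phi(\overline{\sigma}_S(T))=\sigma_S(A)$ and avoids the Clifford-algebra bookkeeping the paper carries out; on the other hand, the paper's explicit resolvent relation (labelled (\ref{importante}) there) is precisely what is used in the proof of the next result, Theorem~\ref{calcfunzformu}, where the change of variables $s=\Phi^{-1}(p)$ in the integral (\ref{fditfor}) requires expressing $S^{-1}(s,T)$ in terms of $S^{-1}(p,A)$. So your argument establishes the present theorem more directly, but the paper's longer computation is an investment that pays off immediately afterwards.

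One small remark: the ``elementary fact'' $\ker Q_A(p)\subseteq\mathrm{range}\,A^2$ that you invoke for injectivity is correct but needs a one-line bootstrap (from $Q_A(p)v=0$ with $p\neq0$ one first gets $v\in\mathrm{range}\,A$; writing $v=Au$ and using injectivity of $A$ gives $Q_A(p)u=0$, hence $u\in\mathrm{range}\,A$ and $v=Au\in\mathrm{range}\,A^2$). Alternatively, injectivity of $Q_A(p)$ follows more quickly from your \emph{first} identity by applying $Q_T(s)^{-1}$ on the left to obtain $A^2=|s-k|^2\,Q_T(s)^{-1}Q_A(p)$ on $V_n$, and then using that $A^2$ is injective.
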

\begin{proof} First we consider the case
 $p\in \sigma_S(A)$ and $p\not= 0$. Recall that
$$
S^{-1}(p,A)=-(A^2-2A Re[p]+|p|^2\mathcal{I})^{-1}(A-\overline{p}\mathcal{I}),
$$
from which we obtain
$$
(A^2-2A Re[p]+|p|^2\mathcal{I})S^{-1}(p,A)=-(A-\overline{p}\mathcal{I}).
$$
Let us apply the operator $A^{-2}$ on the left to get
$$
(\mathcal{I}-2 Re[p] A^{-1}+A^{-2}|p|^2)S^{-1}(p,A)=-(A^{-1}-A^{-2}\overline{p}).
$$
Now we use the relations
$$
A^{-1}=T-k\mathcal{I},\ \ \ A^{-2}=T^2-2kT+k^2\mathcal{I}
$$
to get
$$
(\mathcal{I}-2 Re[p] (T-k\mathcal{I})+(T^2-2kT+k^2\mathcal{I})|p|^2)S^{-1}(p,A)
$$
$$
=-(T-k\mathcal{I}-(T^2-2kT+k^2\mathcal{I})\overline{p}).
$$
Using the identities
\begin{equation}\label{identsk}
s_0|p|^2=k|p|^2+p_0,\ \ \ \ |p|^2|s|^2=k^2|p|^2+2p_0k+1
\end{equation}
we have
$$
(T^2 |p|^2 -2s_0|p|^2T + |s|^2 |p|^2\mathcal{I})
S^{-1}(p,A)=-(T-k\mathcal{I}-(T^2-2kT+k^2\mathcal{I})\overline{p}).
$$
So we get the equalities
$$
S^{-1}(p,A)=-\frac{1}{|p|^2}(T^2  -2s_0T + |s|^2 \mathcal{I})^{-1}
(T-k\mathcal{I}-(T^2-2kT+k^2\mathcal{I})\overline{p})
$$
$$
=-\frac{1}{|p|^2}(T^2  -2s_0T + |s|^2 \mathcal{I})^{-1}
(T{\overline{p}}^{-1}-k{\overline{p}}^{-1}\mathcal{I}-T^2+2kT-k^2\mathcal{I})\overline{p}
$$
$$
=-\frac{1}{|p|^2}(T^2  -2s_0T + |s|^2 \mathcal{I})^{-1}
\Big(-(T^2-2s_0T+|s|^2\mathcal{I})\overline{p}
$$
$$
+( T(2k-2s_0+\overline{p}^{-1})
+(|s|^2-k^2-k\overline{p}^{-1})\mathcal{I})\overline{p}\Big)
$$
$$
=
\frac{\overline{p}}{|p|^2}\mathcal{I}
-\frac{1}{|p|^2}(T^2  -2s_0T + |s|^2 \mathcal{I})^{-1}
( T
+(|s|^2-k^2-k\overline{p}^{-1})(2k-2s_0
+\overline{p}^{-1})^{-1}\mathcal{I})(2k-2s_0+\overline{p}^{-1})\overline{p}.
$$
With some calculation we get
$$
S^{-1}(p,A)=p^{-1}\mathcal{I}
$$
$$
-\frac{1}{|p|^2}(T^2  -2s_0T + |s|^2 \mathcal{I})^{-1}
( T -\overline{s}\mathcal{I}+
[\overline{s}+(|s|^2-k^2-k\overline{p}^{-1})
(2k-2s_0+\overline{p}^{-1})^{-1}] \mathcal{I})(2k-2s_0+\overline{p}^{-1})\overline{p}
$$
and also
$$
S^{-1}(p,A)=
p^{-1}\mathcal{I}
+S^{-1}(s,T)\lambda
-\frac{1}{|p|^2}(T^2  -2s_0T + |s|^2 \mathcal{I})^{-1}
\Lambda\mathcal{I}
$$
where
$$
\lambda:=(2k-2s_0+\overline{p}^{-1})\frac{\overline{p}}{|p|^2}
$$
and
$$
\Lambda:=[\overline{s}+(|s|^2-k^2-k\overline{p}^{-1})
(2k-2s_0+\overline{p}^{-1})^{-1}](2k-2s_0+\overline{p}^{-1})\overline{p}
$$
Using the identities (\ref{identsk}) we have that
$$
\lambda=(2k-2s_0+\overline{p}^{-1})\frac{\overline{p}}{|p|^2}
=
(2k-2s_0+\overline{p}^{-1}){p}^{-1}
$$
$$
=
(2k-2s_0+\overline{s}-k)(s-k)=-(s-k)^2=-{p^{-2}}
$$
and with analogous calculation we get
$$
\Lambda=\overline{s}(2k-2s_0+\overline{p}^{-1})\overline{p}+
(|s|^2-k^2-k\overline{p}^{-1})\overline{p}=0.
$$
So
$$
S^{-1}(p,A)=
p^{-1}\mathcal{I}
-S^{-1}(s,T){p^{-2}},
$$
but also
\begin{equation}\label{importante}
S^{-1}(s,T)=
p\mathcal{I}
-S^{-1}(p,A)p^2.
\end{equation}
So $p\in \rho_S(A)$, $p\not=0$ then $s\in \rho_S(T)$.
\par\noindent
Now take $s\in \rho_S(T)$ and observe that, from the definitions of $S^{-1}(s,T)$ and
of $A$, with analogous calculation as above, we get
$$
S^{-1}(s,T)=-AS^{-1}(p,A)p,
$$
so if  $s\in \rho_S(T)$ then $p\in \rho_S(A)$, $p\not=0$.
\par\noindent
The point $p=0$ belongs to $\sigma_S(A)$
 since $S^{-1}(0,A)=A^{-1}=T-k\mathcal{I}$ is unbounded.
 The last part of the statement is evident from the definition of $\Phi$.
\end{proof}

\begin{theorem}\label{calcfunzformu}
Let $T:{\cal D}(T)\to V_n$ be a linear closed operator as in Definition \ref{defschaa} with
$\rho_S(T)\cap \mathbb{R}\neq\emptyset$ and suppose that  $f\in \mathcal{M}_{\overline{\sigma}_S(T)}$.
Then operator $f(T)$ defined in (\ref{fdit}) is independent of $k\in \rho_S(T)\cap \mathbb{R}$.

Let $W$, be an open set such that $\overline{\sigma}_S(T) \subset W$
and let $f$ be an s-monogenic function on $W\cup\partial W$.
Set $W_I= W\cap L_I$ for $I\in \mathbb{S}$ be such that
its boundary $\partial W_I$ is positively oriented and consists of a finite number
of rectifiable Jordan curves.
Then
\begin{equation}\label{fditfor}
f(T)=f(\infty)\mathcal{I}+\frac{1}{2\pi} \int_{\partial W_I} S^{-1}(s,T)ds_I f(s).
\end{equation}
\end{theorem}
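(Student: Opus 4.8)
The plan is to reduce the claimed formula (\ref{fditfor}) to the bounded functional calculus of Section~3 applied to $A=(T-k\mathcal{I})^{-1}$, transported back to $T$ through the inversion $\Phi(s)=(s-k)^{-1}$. Once (\ref{fditfor}) is proved for one admissible $W$, both assertions follow at once: the right-hand side of (\ref{fditfor}) makes no mention of $k$, so $f(T)=\phi(A)$ is the same for every $k\in\rho_S(T)\cap\mathbb{R}$; and a Cauchy-theorem argument shows that right-hand side is independent of the admissible $W$ (and of $I$), so (\ref{fditfor}) holds as stated.

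First I would fix $k\in\rho_S(T)\cap\mathbb{R}$ with $\alpha=k$, so that $\Phi^{-1}(p)=k+p^{-1}$, $A=(T-k\mathcal{I})^{-1}$ is a bounded operator on $V_n$, $\phi(p)=f(k+p^{-1})$, and $f(T)=\phi(A)$ by (\ref{fdit}). Here $\phi$ is s-monogenic near $\sigma_S(A)$, being the composition of $f$ with the real-coefficient function $k+p^{-1}$, and $\phi(0)=f(\infty)$ is finite because $f$ is s-monogenic at $\infty$. Since $A$ is bounded, $\sigma_S(A)$ is bounded, and by the correspondence theorem proved immediately above $\sigma_S(A)=\Phi(\overline{\sigma}_S(T))$ with $0\in\sigma_S(A)$ corresponding to $\infty$. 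I would pick a bounded open $G\supset \sigma_S(A)$ (which equals $circ(\sigma_S(A))$ since $\sigma_S(A)$ is circular) with $\partial G_I$ a finite union of rectifiable Jordan curves and $\phi$ s-monogenic on $\overline G$, so that by (\ref{FC})
$$ f(T)=\phi(A)=\frac{1}{2\pi}\int_{\partial G_I} S^{-1}(p,A)\,dp_I\,\phi(p). $$
Next I would substitute $p=\Phi(s)=(s-k)^{-1}$. Being a Möbius map, $\Phi$ is orientation preserving, sends $G_I$ to the unbounded open set $W_I:=\Phi^{-1}(G_I)\subset L_I$, which contains $\infty$ and whose closure avoids $k$, and sends $\partial G_I$ (positively oriented around $\sigma_S(A)$) to $\partial W_I$ with exactly its positive orientation, still a finite union of rectifiable Jordan curves. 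One has $dp=-(s-k)^{-2}ds$, hence $dp_I=-(s-k)^{-2}ds_I$; moreover $\phi(p)=f(s)$; and, rearranging (\ref{importante}), $S^{-1}(p,A)=(s-k)\mathcal{I}-S^{-1}(s,T)(s-k)^{2}$. Inserting these and cancelling the powers of $(s-k)$ (they occur in matching pairs $(s-k)^{\pm1}$ and $(s-k)^{\pm2}$, so the cancellation is unaffected by the placement of Clifford scalars relative to the operators) yields
$$ f(T)=-\frac{1}{2\pi}\int_{\partial W_I}(s-k)^{-1}\,ds_I\,f(s)+\frac{1}{2\pi}\int_{\partial W_I}S^{-1}(s,T)\,ds_I\,f(s). $$

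It then remains to show the first integral equals $f(\infty)\mathcal{I}$. Since $k\notin\overline{W_I}$, the integrand $(s-k)^{-1}ds_I f(s)$ is holomorphic on $L_I$ throughout $W_I$; note this uses only that $(s-k)^{-1}$ has its pole at the real point $k$, so it holds even though $\sigma_S(T)\cap L_I\subset W_I$. The curve $\partial W_I$ is bounded and $W_I$ contains $\{|s|>R_0\}$ for some $R_0$; by the Cauchy theorem on $L_I$, the integral over $\partial W_I$ equals the integral over $\{|s|=R\}$, $R>R_0$, traversed so as to keep $W_I$ (hence $\infty$) on the left, i.e.\ clockwise. Letting $R\to\infty$ and using $(s-k)^{-1}=s^{-1}+O(|s|^{-2})$, $f(s)\to f(\infty)$, and $\int_{|s|=R}ds_I\,s^{-1}=2\pi$ (as in the proof of Theorem \ref{polyaa}), the error terms vanish in the limit and the integral equals $-2\pi\,f(\infty)$, so the first term is $f(\infty)\mathcal{I}$; this proves (\ref{fditfor}) for this $W$. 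Finally, for admissible $W\subseteq W'$ the integrand $S^{-1}(s,T)ds_I f(s)$ is s-monogenic on the bounded region between $\partial W_I$ and $\partial W'_I$, since that region avoids $\sigma_S(T)$ and lies in the domain of $f$, so the two integrals agree by the Cauchy theorem (in general one passes through $W\cap W'$); independence of $I$ follows exactly as in Theorem \ref{indipdaui} from the structure of $\sigma_S(T)$. Hence (\ref{fditfor}) holds for every admissible $W$, and since its right-hand side does not involve $k$, the operator $f(T)$ in (\ref{fdit}) is independent of $k\in\rho_S(T)\cap\mathbb{R}$.

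The step I expect to be the main obstacle is the identification of the $(s-k)^{-1}$-integral with $f(\infty)\mathcal{I}$: this is a residue-at-infinity computation requiring care with the orientation that the inversion $\Phi$ induces on $\partial W_I$ — $\Phi$ interchanges the bounded interior around $\sigma_S(A)$ with a neighbourhood of $\infty$ — and with the noncommutative bookkeeping, the left action of the operators versus right multiplication by the Clifford-valued $ds_I$ and $f(s)$. The remaining steps are a routine change of variable together with the already established identity (\ref{importante}).
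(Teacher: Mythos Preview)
Your proof is correct and follows essentially the same strategy as the paper: both use the change of variables $p=(s-k)^{-1}$ together with the identity (\ref{importante}) to relate the bounded calculus for $\phi(A)$ to the integral in (\ref{fditfor}), and both deduce independence of $k$ from the fact that (\ref{fditfor}) contains no $k$. The only cosmetic difference is the direction of the substitution: the paper starts from $\int_{\partial W_I}S^{-1}(s,T)\,ds_I\,f(s)$ and passes to the $p$-variable, so the extra term is identified as $\phi(0)=f(\infty)$ by the ordinary Cauchy formula on the bounded domain $\Phi(W_I)$, rather than as the residue at infinity you compute in the $s$-plane; this sidesteps the orientation bookkeeping you flagged, but your treatment of that point is sound.
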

\begin{proof}
The first part of the statement follows from the validity of formula (\ref{fditfor})
since the integral is independent of $k$.
\par\noindent
Given $k\in\rho_S(T)\cap\mathbb{R}$ and the set $W$ we can assume that
$k\not\in W_I\cup\partial W_I$, $\forall I\in\mathbb{S}$
$I\in \mathbb{S}$ since otherwise, by the Cauchy theorem,  we can replace
$W$ by $W'$, on which $f$ is s-monogenic, such that $k\not\in W_I'\cup\partial W_I'$,
without altering the value of the integral (\ref{fditfor}). Moreover, the integral
(\ref{fditfor}) is independent of the choice of $I\in\mathbb{S}$, thanks to the
structure of the spectrum (see Theorem \ref{strutturaS}) and an argument similar to the
one used to prove Theorem \ref{indipdaui}.
\par\noindent
 We have that ${\cal V}_I:=\Phi^{-1}(W_I)$ is an open set that contains
$\sigma_S(T)$ and its boundary $\partial {\cal V}_I=\Phi^{-1}(\partial W_I)$
is positively oriented and consists of a finite number
of rectifiable Jordan curves.  Using the relation (\ref{importante}) we have
$$
  \frac{1}{2\pi} \int_{\partial W_I} S^{-1}(s,T)ds_I f(s)
$$
$$
  = -\frac{1}{2\pi} \int_{\partial {\cal V}_I} \Big(p\mathcal{I}-S^{-1}(p,A)p^2 \Big) p^{-2}dp_I \phi(p)
$$
$$= -\frac{1}{2\pi} \int_{\partial {\cal V}_I} p^{-1}dp_I \phi(p)+\frac{1}{2\pi} \int_{\partial {\cal V}_I} S^{-1}(p,A)dp_I \phi(p)
$$
$$
= -\mathcal{I} \phi(0)+\phi(A)
$$
now by definition $\phi(A)=f(T)$ and $\phi(0)=f(\infty)$ we obtain
$$
\frac{1}{2\pi} \int_{\partial W_I} S^{-1}(s,T)ds_I f(s)=-\mathcal{I} f(\infty)+f(T).
$$

\end{proof}

\end{document}